\newtheorem{thm}{Theorem}[section]
\newtheorem{lem}[thm]{Lemma}
\newtheorem{prop}[thm]{Proposition}
\theoremstyle{definition}
\theoremstyle{remark}
\newtheorem{rem}[thm]{Remark}
\newtheorem{exam}[thm]{Example}
\theoremstyle{conjecture}
\numberwithin{equation}{section}
\begin{document}

\title[Notes On Quadratic Integers and Real Quadratic Number Fields]
 {Notes On Quadratic Integers and Real Quadratic Number Fields}

\author{ Park, Jeongho }

\address{Department of Mathematics,
POSTECH,
San 31 Hyoja Dong, Nam-Gu, Pohang 790-784, KOREA.
Tel. 82-10-3047-7793.}

\email{pkskng@postech.ac.kr}
\keywords{Real quadratic field, Class number, Fundamental unit, Principal ideal, Prime ideal}
\subjclass{Primary:11R29, Secondary: 11R11, 11J68, 11Y40}
%11R29: Class numbers, class groups, discriminants
%11Y40: Algebraic number theory computations
%11J68: Approximation to algebraic numbers
%11R11: Quadratic extensions
\thanks{Supported by the National Research Foundation of Korea(NRF) grant funded by the Korea government(MEST)
(2010-0026473).}

\begin{abstract}
It is shown that when a real quadratic integer $\xi$ of fixed norm $\mu$ is considered, the fundamental unit $\varepsilon_d$ of the field $\mathbb{Q}(\xi) = \mathbb{Q}(\sqrt{d})$ satisfies $\log \varepsilon_d \gg (\log d)^2$ almost always. An easy construction of a more general set containing all the radicands $d$ of such fields is given via quadratic sequences, and the efficiency of this substitution is estimated explicitly. When $\mu = -1$, the construction gives all $d$'s for which the negative Pell's equation $X^2 - d Y^2 = -1$ (or more generally $X^2 - D Y^2 = -4$) is soluble. When $\mu$ is a prime, it gives all of the real quadratic fields in which the prime ideals lying over $\mu$ are principal.
\end{abstract}

% -----------------------------------------------------------
\maketitle
% -----------------------------------------------------------

\section*{Introduction}\label{sec_introduction}

The regulator is probably one of the most unpredictable constants related to a number field. Dirichlet's class number formula already gave a clear connection between the $L$-value, the class number, and the regulator, but while the first two have been admitting huge theories in various perspectives, the regulator seems to remain far from being exploited. Let $d$ be a positive square-free integer and $K$ the field $\mathbb{Q}(\sqrt{d})$ with discriminant $D$, class number $h_d$ and fundamental unit $\varepsilon_d$. In this case Dirichlet's formula reduces to
\begin{equation*}
h_d = \frac{\sqrt{D} L(1,\chi)}{2 \log{\varepsilon_d}}.
\end{equation*}
The $L$-value is known to stay in a relatively narrow range $D^{-\epsilon} \ll L(1,\chi) \ll \log D$ for arbitrary $\epsilon > 0$ \cite{Li}, so we know that $D^{1/2 - \epsilon} \ll h_d \log \varepsilon_d \ll D^{1/2 + \epsilon}$. As for the class number, many things can be said about the primary parts of ideal class groups. Nevertheless we know very little about the fundamental unit, despite the fact that the computation of $h_d$ is essentially impossible without computing $\log \varepsilon_d$.

Although $\log \varepsilon_d$ seems to vary in a wide range between $O(\log D)$ and $O(D^{1/2}\log D)$ in a somewhat uncontrolled way, concerning its average we have several precise conjectures in terms of $h_d$ \cite{Cohen_2, Hooley_1, Jacobson}. A problem that can be considered as a yardstick in this direction is to show that the regulator is in most cases much larger than the class number.

It is well known that the fundamental unit $\varepsilon_{d}$ comes from the continued fraction expansion of $\sqrt{d}$ or $\frac{\sqrt{d}+1}{2}$ whose period $l$ is the most dominant factor in the size of $\varepsilon_{d}$ \cite{Cohen_3}. Naturally, many researches have been focused on the size of period and now its upper bound $l \ll \sqrt{d} \log d$ has fairly precise versions \cite{Cohn,Podsypanin}. As for the lower bound, it is generally believed that $\log \varepsilon_d \gg D^{1/2 - \epsilon}$ holds for most of square-free integer $d$. However, the average order of $\varepsilon_d$ is known only to the following extent; for almost all non-square $d$, one has $\varepsilon_d > d^{\frac{7}{4} - \epsilon}$ \cite{Fouvry_2,Fouvry_1}.

Keeping this difficulty in mind, in this article we lay our interest on quadratic integers instead. When a quadratic integer $\xi$ has norm a rational prime $p$, $\xi$ generates a principal prime ideal, i.e., $p$ splits or ramifies into principal prime ideals in $\mathbb{Q}(\xi) / \mathbb{Q}$. A simple fact is that the more there are rational primes $p \ll \sqrt{d}$ that split into principal primes $P \overline{P}$ in $\mathbb{Q}(\sqrt{d})$, the bigger the fundamental unit $\varepsilon_d$ becomes \cite{Yamamoto}. Another fact is that (see section~\ref{sec_Quadratic integers of small norms}) if $p < \sqrt{D}/2$ and $\xi$, $\tilde{\xi}$ are the least elements in $P$, $\overline{P}$ among those numbers greater than 1, then $\xi \tilde{\xi} = p \varepsilon_d$. This suggests that a full knowledge about a single principal prime and its conjugate gives full information about $\varepsilon_d$. This already justifies a study about quadratic integers, and we show in this article that $\log \varepsilon_d \gg \log^2 d$ is true in most cases, where `most' shall be interpreted in an adequate fashion based on quadratic integers.
\bigskip

Now we formulate the main result. Let $\mu$ be a square-free integer with $|\mu|>1$ and $\xi$ a quadratic integer, i.e., a zero of a monic polynomial $X^2 - TX + \mu$ for some integer $T$. Since we will treat real quadratic fields which appear only when $T^2 - 4 \mu > 0$, we shall fix the norm $\mu$ of $\xi$ and let $|T| \rightarrow \infty$. For each integer $T$ with $T^2 - 4\mu > 0$, let $\xi_{\mu}(T)$ be the large root of $X^2 - TX + \mu=0$ and $D=D_{\mu}(T)$ the discriminant of $\mathbb{Q}(\xi_{\mu}(T))$ when $\xi_{\mu}(T)$ is irrational. Observe that $D_{\mu}(T)=D_{\mu}(-T)$, so it is no harm to consider positive $T$ only. Let $d = d_{\mu}(T)$ be the radicand corresponding to $D$, viz., $d = D$ if $D \equiv 1$ mod $4$ and $D/4$ otherwise. Let $f_{|\mu|}(N)$ be the number of distinct fields in $\{ \mathbb{Q}(\xi_{-\mu}(T))\;|\; 1<T<N\} \bigcup \{ \mathbb{Q}(\xi_{\mu}(T))\;|\; 1<T<N\}$. $\omega(\mu)$ denotes the number of distinct prime factors of $\mu$.

Our main result is following

\begin{thm}\label{thm_main}
\begin{equation}\label{eqn_thm_main_1}
    \liminf_{N \rightarrow \infty}{\frac{f_{|\mu|}(N)}{N}} \geq 2^{-\omega(\mu)} \quad \text{ and }
\end{equation}

\begin{equation}\label{eqn_thm_main_2}
    \lim_{N\rightarrow \infty} \left( \frac{\sharp \left\{ 1 < T < N\;| \; \log \varepsilon_d > L_{\mu}(T)  \right\}}{N} \right) = 1, \quad \text{where}
\end{equation}

\begin{equation*}
    L_{\mu}(T) =  \frac{1}{\log |\mu|} \left( \log{ \frac{\sqrt{D}}{2}  } \right)^2 - \left( 3 - \frac{2 \log 2}{\log |\mu|} \right) \log{\frac{\sqrt{D}}{2}} - 2 \log 2 - \frac{2|\mu|}{|\mu|-1}.
    \end{equation*}
\end{thm}
\bigskip

Yamamoto already gave the infinitude of number fields that satisfy $\log \varepsilon_d \gg \log^{n+1} D$ for $n = 2$ \cite{Yamamoto}, but he did not discuss how often such fields arise in nature. On the other hand, Reiter tried to make Yamamoto's bound effective \cite{Reiter}, and in the process of doing so the original leading term $\frac{2^n}{(n+1)!\log p_1 \cdots \log p_n} \left( \log \sqrt{D} \right)^{n+1}$ was weakened to $\frac{1}{2(n+1)!\log p_1 \cdots \log p_n} \left( \log \frac{\sqrt{D}}{2} \right)^{n+1}$. Theorem~\ref{thm_main} gives a density result about the appearance of such fields for the case $n=1$, and the lower bound for the regulator is also tighter than that of Reiter.

Recalling how little we know about $\varepsilon_d$ in general, theorem~\ref{thm_main} suggests an interesting set of radicands, namely $\frak{D} = \frak{D}_{\mu} = \{ d_{\mu}(T)\;|\;T > 0,\; T^2-4\mu >0 \}$. It would be desirable to list up the elements of $\frak{D}$ according to their size and examine the density of $\frak{D}$ in $\mathbb{Z}$, or to construct the set $\frak{D}$ explicitly. Unfortunately we were not successful in this direction. Instead, we try to consider a bigger set that contains $\frak{D}$ and whose construction is simple, explicit and has a sort of measurable efficiency. For this, as long as $\xi_{\mu}(T)$ and $\xi_{\mu}(T')$ generate distinct ideals in $\mathbb{Q}(\sqrt{d})$, we allow the radicand $d$ to be counted again in $\frak{D}$.

Let $I^{(0)}(\mu) = \{ (y,x)\in\mathbb{Z}^2\;|\; 0\leq x < y,\; gcd(x,y)=1,\; x^2\equiv \mu\text{ (mod $y$)} \}$, $I^{(1)}(\mu) = \{ (y,x)\in I^{(0)}(\mu)\;|\; y\text{ is odd} \}$ and
\begin{equation*}
\tilde{y} = \begin{cases}
\frac{y}{2}\;\; &\text{ if $y$ is even}\\
y\;\; &\text{ otherwise.}
\end{cases}
\end{equation*}

We use $\mathbb{N}$ to denote the set of nonnegative integers. Define
\begin{align*}
    &\frak{D}^{(0)}(\mu;y;t) = \{ \tilde{y}^2k^2 + 2 k (\tilde{y}/y) \sqrt{\mu+y^2 t} + t\;|\; k\in\mathbb{N}  \},\\
    &\frak{D}^{(1)}(\mu;y;t) = \{ 4 y^2 k^2 + 4 k \sqrt{4\mu+ y^2 t} + t\;|\; k\in\mathbb{N}  \}
\end{align*}
which are quadratic progressions, and let
\begin{equation*}
    \widehat{\frak{D}}^{(j)}(\mu;y;t) = \{ d \in \frak{D}^{(j)}(\mu;y;t)\;|\; d\text{ is square-free}  \}.
\end{equation*}
Then we have
\begin{prop}\label{prop_d_mu}
(1) There exist maps $\phi_{\mu}^{(j)}:I^{(j)}(\mu) \rightarrow \frak{D}$ for $j=0,1$ such that
\begin{equation*}
\frak{D} \subset \bigcup_{j=0,1} \bigcup_{(y,x)\in I^{(j)}(\mu)} \frak{D}^{(j)}(\mu;y;\phi_{\mu}^{(j)}(y,x)).
\end{equation*}
(2) For each $(j,y,x)$ there exists an arithmetic progression $\{T^{(j)}_{y,x}(d)\;|\; d \in \frak{D}^{(j)}(\mu;y;\phi_{\mu}^{(j)}(y,x)) \}$ satisfying $d_{\mu}(T^{(j)}_{y,x}(d)) = d$ for $d \in \frak{D} \cap \frak{D}^{(j)}(\mu;y;\phi_{\mu}^{(j)}(y,x))$, such that
\begin{equation}\label{eqn_d_T_zeta sum}
     \sum_{j=0,1} \sum_{(y,x) \in I^{(j)}(\mu)} \sum_{d \in \frak{D}^{(j)}(\mu;y;\phi_{\mu}^{(j)}(y,x)) } \frac{1}{\left(T^{(j)}_{y,x}(d)\right)^s}
\end{equation}
has a simple pole at $s=1$ with residue $R(\mu) \ll 1$.
\end{prop}
\bigskip

The proof of proposition~\ref{prop_d_mu} is simple but is constructive and shows that the maps $\phi_{\mu}^{(j)}$ and $T^{(j)}_{y,x}$ can be determined in a canonical way.

What proposition~\ref{prop_d_mu} says is the following. Since $|\xi_{\mu}(T) - T| \ll |\mu/T|$, one has $\sum \xi_{\mu}(T)^{-s} - \sum T^{-s} \ll 1$ as $s \rightarrow 1+$. It turns out by theorem~\ref{thm_E_mu_x} that even when we pick the traces $T$ in a way that the ideals generated by $\xi_{\mu}(T)$ are all distinct, the series $\sum T^{-s}$ still have the same residue with $\zeta(s)$ at $s=1$. We may say for this that the residue corresponding to $\frak{D}$ is 1. In proposition~\ref{prop_d_mu}, (1) suggests that $\frak{D}$ can be approximated by a union of quadratic progressions, and by (2), its correspondent residue $R(\mu)$ may be considered as a measure of this approximation quality.

To specify how much this union is bigger than $\frak{D}$, let $\widehat{R}(\mu)$ be the residue in \eqref{eqn_d_T_zeta sum} that we shall obtain when $\frak{D}^{(j)}(\mu;y;\phi_{\mu}^{(j)}(y,x))$ is replaced by $\widehat{\frak{D}}^{(j)}(\mu;y;\phi_{\mu}^{(j)}(y,x))$ in the innermost sum. We first observe that $\widehat{R}(\mu)=1$. This is because $T^{(j)}_{y,x}(d)^2 - 4\mu = y^2d$ under our construction, and if we assume $d$ is square-free, there is only one quadruple $(j,y,x,d)$ satisfying $T^{(j)}_{y,x}(d) = T$ for a fixed $T$. Since $R(\mu)$ is the sum of all the residues of the innermost sums in \eqref{eqn_d_T_zeta sum}, computing the density of square-free numbers in $\frak{D}^{(j)}(\mu;y;\phi_{\mu}^{(j)}(y,x))$ shall give an estimate of $R(\mu)$. Write $[N] = \{ 1,2,\cdots,N\}$ and put
\begin{equation*}
    f_{y,x}^{(j)}(N) = |\frak{D}^{(j)}(\mu;y;\phi_{\mu}^{(j)}(y,x)) \bigcap [N]|,\;\;
    \widehat{f}_{y,x}^{(j)}(N) = |\widehat{\frak{D}}^{(j)}(\mu;y;\phi_{\mu}^{(j)}(y,x)) \bigcap [N]|.
\end{equation*}
Then we have

\begin{prop}\label{prop_d_squarefree}
For each pair $(y,x) \in I^{(j)}(\mu)$,
\begin{equation*}
    \lim_{N \rightarrow \infty} \frac{\widehat{f}_{y,x}^{(j)}(N)}{f_{y,x}^{(j)}(N)} = \left( 1 - \frac{\omega_d(2)}{4} \right) \cdot \prod_{p|y}'\left( 1 - \frac{1}{p^2} \right) \cdot \prod_{p^2|\mu}'\left( 1 - \frac{1}{p} \right) \cdot \prod_{p \nmid \mu y,\;\left(\frac{\mu}{p}\right)=1}' \left( 1 - \frac{2}{p^2} \right)
\end{equation*}
where the restricted products are over odd primes, and
\begin{equation*}
    \omega_d(2) =
    \left\{
      \begin{array}{ll}
        2 & \hbox{if $j=0$, $y$ is odd and $\mu \equiv 0$ or $1$ (mod $4$);} \\
        2 & \hbox{if $j=0$, $y \equiv 2$ (mod $4$), $\mu \equiv 1$ (mod $8$);} \\
        1 & \hbox{if $j=0$, $y \equiv 0$ (mod $4$)} \\
        0 & \hbox{otherwise.}
      \end{array}
    \right.
\end{equation*}
\end{prop}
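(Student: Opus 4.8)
The plan is to realize each set $\frak{D}^{(j)}(\mu;y;\phi_\mu^{(j)}(y,x))$ as the value set of one explicit integral quadratic polynomial in a variable $k$, apply the classical theorem on square-free values of quadratic polynomials to reduce the assertion to the computation of the local densities $\rho_j(p^2)/p^2$ for all primes $p$, and then carry out those local computations --- uniformly for odd $p$ via a completed-square identity, and by a short case analysis for $p=2$.

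Fix $(y,x)\in I^{(j)}(\mu)$ and put $t=\phi_\mu^{(j)}(y,x)$. By the construction in the proof of Proposition~\ref{prop_d_mu} the integer $\mu+y^2t$ (for $j=0$), resp.\ $4\mu+y^2t$ (for $j=1$), is a perfect square; writing it $m^2$, resp.\ $n^2$, with $m,n\in\mathbb{Z}$ and substituting $\sqrt{\mu+y^2t}=m$, resp.\ $\sqrt{4\mu+y^2t}=n$, in the definitions, we have $\frak{D}^{(j)}(\mu;y;t)=\{\,Q_j(k):k\in\mathbb{N}\,\}$ for
\[
Q_0(k)=\tilde y^2k^2+2(\tilde y/y)\,m\,k+t,\qquad Q_1(k)=4y^2k^2+4nk+t .
\]
Let $L$ and $B$ denote the leading and linear coefficients of $Q_j$, so that $4L\,Q_j(k)=(2Lk+B)^2-\Delta_j$ with $\Delta_j:=B^2-4Lt$; using $m^2=\mu+y^2t$ (resp.\ $n^2=4\mu+y^2t$) one finds $\Delta_0=\mu$ when $y$ is even, $\Delta_0=4\mu$ when $y$ is odd, and $\Delta_1=64\mu$ --- in every case a perfect square times $\mu$. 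Since $Q_j$ is increasing on $\mathbb{N}$, both $f^{(j)}_{y,x}(N)$ and $\widehat f^{(j)}_{y,x}(N)$ count those $k$ with $Q_j(k)\le N$, the latter restricted to $k$ with $Q_j(k)$ square-free; hence the quotient in the statement converges to the density of $k\in\mathbb{N}$ for which $Q_j(k)$ is square-free. By the classical theorem on square-free values of quadratic polynomials --- an elementary inclusion--exclusion over the square divisors $d^{2}\mid Q_j(k)$ --- this density equals $\prod_p\bigl(1-\rho_j(p^2)/p^2\bigr)$, where $\rho_j(q)=\#\{\,k\bmod q:q\mid Q_j(k)\,\}$; the product converges because $\rho_j(p^2)\le2$ whenever $p$ is odd with $p\nmid\mu y$, $Q_j\bmod p$ being then a separable quadratic. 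It remains to evaluate $\rho_j(p^2)$ for every prime $p$.

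For odd $p$ the computation is uniform once one observes $\gcd(\mu,y)=1$, which follows from $\gcd(x,y)=1$ and $x^2\equiv\mu\pmod y$. If $p\mid y$, then $p^2\mid L$, so $Q_j(k)\equiv Bk+t\pmod{p^2}$; moreover $p\nmid B$, because $p$ is odd and $m^2=\mu+y^2t$ (resp.\ $n^2=4\mu+y^2t$) with $p\nmid\mu$ and $p^2\mid y^2t$ forces $p\nmid m$ (resp.\ $p\nmid n$). Hence $\rho_j(p^2)=1$, contributing the factor $1-p^{-2}$. If $p\nmid y$, then $p\nmid L$, so modulo $p^2$ the congruence $Q_j(k)\equiv0$ is equivalent to $(2Lk+B)^2\equiv\Delta_j$, with $\Delta_j$ a unit times $\mu$. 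When $p\nmid\mu$ this has two simple solutions --- each lifting uniquely by Hensel, so $\rho_j(p^2)=2$ and the factor is $1-2p^{-2}$ --- if $\left(\frac{\mu}{p}\right)=1$, and none ($\rho_j(p^2)=0$) if $\left(\frac{\mu}{p}\right)=-1$. When $p\mid\mu$, solvability forces $v_p(\Delta_j)=v_p(\mu)\ge2$, and then the congruence reduces to $p\mid 2Lk+B$, a single residue class modulo $p$, so $\rho_j(p^2)=p$ and the factor is $1-p^{-1}$; if $v_p(\mu)=1$ there is no solution and $\rho_j(p^2)=0$. Collecting these possibilities over odd primes reproduces the three restricted products in the statement, all other odd primes contributing the factor $1$.

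The remaining prime $p=2$ is the genuinely delicate case, and this is where I expect essentially all the remaining work to lie. Square-freeness is tested modulo $4$, so one needs $\rho_j(4)$, and its value depends on the residues of $m$ (resp.\ $n$) and $t$ modulo $8$, hence on the precise canonical choice $\phi_\mu^{(j)}(y,x)$ produced in the proof of Proposition~\ref{prop_d_mu}. The plan is to extract from that construction the classes modulo $8$ of $t$ and of the linear coefficient $B$ in terms of $\mu\bmod8$ and the class of $y$ modulo $4$; in the relevant branches $B$ is even, so $Q_j(k)\bmod4$ depends only on $k\bmod2$, and a short enumeration over $k\in\{0,1\}$ yields $\rho_j(4)=\omega_d(2)$ in each of the four listed cases, with $\rho_j(4)\le2$ throughout, so $4$ is never a fixed divisor and the density is positive. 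The factor from $p=2$ is then $1-\omega_d(2)/4$, and multiplying all local factors gives the asserted formula. By contrast the odd primes are handled uniformly from the single identity $4L\,Q_j(k)=(2Lk+B)^2-(\text{square})\cdot\mu$ together with $\gcd(\mu,y)=1$, so this $2$-adic bookkeeping is the only nonroutine ingredient.
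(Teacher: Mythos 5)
Your overall route is the same as the paper's: realize $\frak{D}^{(j)}(\mu;y;\phi_\mu^{(j)}(y,x))$ as the value set of an explicit quadratic polynomial in $k$, invoke the (unconditional, for quadratics) square-free-values theorem to reduce everything to the local densities $\rho_j(p^2)/p^2$, and compute those densities prime by prime. Your odd-prime computation is correct and complete, and the completed-square identity $4LQ_j(k)=(2Lk+B)^2-\Delta_j$ with $\Delta_j$ a square times $\mu$ is a slightly cleaner, more uniform device than the paper's combination of the discriminant computation with its separate quadratic Hensel lemma; that part I would accept as is.

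The gap is at $p=2$, and it is a genuine one rather than a routine omission, because the four-case formula for $\omega_d(2)$ is precisely the $2$-adic content of the proposition: you assert $\rho_j(4)=\omega_d(2)$ in each listed case but never derive it, and the one structural claim you do make about the computation is wrong. You say that ``in the relevant branches $B$ is even, so $Q_j(k)\bmod 4$ depends only on $k\bmod 2$.'' For $j=0$ and $y$ even one has $\tilde y=y/2$ and $B=2(\tilde y/y)m=m$ with $m^2=\mu+y^2t$; since $\gcd(x,y)=1$ forces $x$, hence $\mu$, hence $m$ to be odd, $B$ is \emph{odd} in both branches $y\equiv 2\ (\mathrm{mod}\ 4)$ and $4\mid y$, and $Q_0(k)\bmod 4$ genuinely depends on $k\bmod 4$ (e.g.\ $Q_0(0)-Q_0(2)\equiv 2B\equiv 2$). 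Your plan is also internally inconsistent with the claimed answer: if $Q_j\bmod 4$ depended only on $k\bmod 2$, then $\rho_j(4)$ would be even, contradicting $\omega_d(2)=1$ in the case $4\mid y$. The correct treatment (as in the paper) is a direct enumeration of $k=0,1,2,3$ modulo $4$: for $y$ odd one finds two roots iff $\mu\equiv 0,1\ (\mathrm{mod}\ 4)$; for $y\equiv 2\ (\mathrm{mod}\ 4)$ the values split into the two classes $d_0,\,d_0+2$ each hit twice, giving $\rho=2$ iff $d_0$ is even, which via $\mu\equiv 1-4d_0\ (\mathrm{mod}\ 8)$ is equivalent to $\mu\equiv 1\ (\mathrm{mod}\ 8)$; for $4\mid y$ the polynomial is a unit times $k$ plus $d_0$ modulo $4$, giving exactly one root; and for $j=1$ one has $d\equiv 1\ (\mathrm{mod}\ 4)$ identically, so $\rho=0$. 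Until this enumeration is actually carried out, the specific constants in the statement are unproved.
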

\bigskip

\begin{exam}
Let $C_{\mu}^{(j)}(y,x)$ be the limit in proposition~\ref{prop_d_squarefree} and consider $\mu=2$. For any pair $(y,x)\in I^{(0)}(2)$, $2$ must be a square modulo $y$. If $y$ is odd, this is possible exactly when every prime factor of $y$ is congruent to $\pm 1$ modulo 8. Since $gcd(x,y)=1$, $y$ cannot be even and we thus have $I^{(0)}(2) = I^{(1)}(2) = \{(1,0),(7,3),(7,4),(17,6),(17,11),\cdots \}$. It is easy to see that $\omega_d(2) = 0$ always, and numerically one can show that
\begin{align*}
    C_{2}^{(j)}(y,x) &= \prod_{p|y}\left( 1 - \frac{1}{p^2}\right) \cdot \prod_{\substack{p \nmid y\\p \equiv \pm 1(\text{mod } 8)}} \left( 1 - \frac{2}{p^2}\right)\\
    &\geq \prod_{p \equiv \pm 1(\text{mod } 8)} \left( 1 - \frac{2}{p^2}\right)\\
    &= 0.94\cdots
\end{align*}
which implies that $0.94 R(2) < \widehat{R}(2) = 1$. The union of quadratic progressions can be therefore considered as a nice substitution for $\frak{D}$ in this case. The choice $\mu = 2$ might seem to be especially good, but in fact $\prod_{p:odd} \left( 1 - \frac{2}{p^2}\right) > 0.64$ and the probability to get a square-free integer from $\frak{D}^{(j)}(\mu;y;\phi_{\mu}^{(j)}(y,x))$ is very often close to 1, and hence so is $R(\mu)$. Note that the product in proposition~\ref{prop_d_squarefree} does not involve any odd prime $p$ with $\left(\frac{\mu}{p}\right)=-1$, and the value of the infinite product is affected very little by large primes. The probability $C_{\mu}^{(j)}(y,x)$ is therefore particularly close to 1 when $\mu \equiv 2$ or $3$ (mod 4), $\mu$ is square-free and $\left(\frac{\mu}{p}\right)=-1$ for small odd primes $p$ so that $4 \nmid y$, $\omega_d(2) = 0$ and many small primes are excluded in the infinite product.
\end{exam}

Next remarks are all about proposition~\ref{prop_d_mu}.

\begin{rem}\emph{Quadratic units.}
We assumed $\mu$ is square-free, but proposition~\ref{prop_d_mu} and \ref{prop_d_squarefree} are valid for $\mu = \pm 1$ too and quadratic units can be dealt with in the same way. Suppose $\mu = \pm 1$. If $d \in \widehat{\frak{D}}^{(0)}(\mu;y,\phi_{\mu}^{(0)}(y,x))$, the fundamental solution to the Pell's equation $X^2 - d Y^2 = \mu$ is $\left( \lfloor \sqrt{d} \rfloor y + x \right)^2 - d y^2 = \mu$. Once $\phi_{\mu}^{(j)}$ is determined naturally, the notion of \emph{leasts to $i$} in ~\cite{JHPark2} means the least element of $\widehat{\frak{D}}^{(0)}(\mu;y,\phi_{\mu}^{(0)}(y,x))$ for each $(y,x) \in I^{(0)}(\mu)$. So proposition~\ref{prop_d_mu} and ~\ref{prop_d_squarefree} may be considered as a generalization of the result in \cite{JHPark2} to quadratic integers with norm other than $\pm 1$.
\end{rem}

\begin{rem}\emph{Ramification and fundamental units.}
Suppose $d \equiv 2,3$ (mod 4) and
\begin{equation*}
    d \in \frak{D}^{(0)}(p;y;\phi_{p}^{(0)}(y,x)),\;\; \sqrt{d}-1 > p\;\;\text{and $p$ is ramified in $\mathbb{Q}(\sqrt{d})/\mathbb{Q}$}.
\end{equation*}
Then (see section~\ref{sec_Quadratic integers of small norms} and the proof of proposition~\ref{prop_d_mu})
\begin{equation}\label{eqn_remark_ramifi_funda_unit}
    \varepsilon_d = \frac{1}{p} \left( \lfloor \sqrt{d} \rfloor y + x + y \sqrt{d} \right)^2.
\end{equation}
Suppose that a real quadratic number field with discriminant $D = 4d$, $\sqrt{d}-1 > 2$ has class number 1. Then 2 is ramified into a power of a principal ideal, say $(\xi)^2$, where $\xi$ comes from the continued fraction of $\sqrt{d}$ and corresponds to a pair $(y,x) \in I^{(0)}(2) \cup I^{(0)}(-2)$. It follows that all such fields have fundamental unit of the form \eqref{eqn_remark_ramifi_funda_unit} for some $(y,x) \in I^{(0)}(\mu)$ where we take $\mu=\pm 2$.
\end{rem}

\begin{rem}\emph{Decomposition into principal primes.}
In case $\mu$ is a prime, say $p$, the set $\frak{D}_{p} \cup \frak{D}_{-p}$ gives a complete list of real quadratic number fields in which $p$ ramifies or splits into principal ideals. This is because any principal prime ideal of norm $p$ has a generator $\xi$ of norm $\pm p$ and $\frak{D}_p \cup \frak{D}_{-p}$ contains all of the positive radicands of such fields. Since the prime ideal over $p$ is always principal when $p$ is inert, this classifies all radicands for which the prime ideals of $\mathbb{Q}(\sqrt{d})$ over $p$ are principal. A square-free integer $d$, though, can be contained in several quadratic progressions given by the construction.
\end{rem}

\bigskip

The article is organized as follows. Section ~\ref{sec_Preliminaries} gives requisites briefly. In section~\ref{sec_Reduced ideals} the relations between principal reduced ideals and reduced quadratic irrationals coming from the continued fraction of $\sqrt{d}$ or $\frac{1+\sqrt{d}}{2}$ are described. In section~\ref{sec_Quadratic integers of small norms} we state the distribution of minimal quadratic integers $\xi$, where a quadratic integer $\xi$ is \emph{minimal} if it is the least number greater than 1 in the ideal $(\xi)$ it generates.  Using the contents in sections from ~\ref{sec_Reduced ideals} to ~\ref{sec_Quadratic integers of small norms}, the proof of theorem~\ref{thm_main} is given in section~\ref{sec_proof of main theorem}. Section~\ref{sec_the quadratic progressions} covers details for proposition~\ref{prop_d_mu} and ~\ref{prop_d_squarefree}. In section~\ref{sec_further topics} we discuss some technical issues for further research.

%-----------------------------------------------------------------------------
\section{Preliminaries}\label{sec_Preliminaries}

In this article $\mu$ represents a square-free integer whose absolute value is comparatively small, and $d$ a square-free positive integer which is usually considered to be large. Let $K_d = \mathbb{Q}(\sqrt{d})$ and $O_d$ the ring of integers of $K_d$, $D$ the discriminant of $K_d$. Put
\begin{equation*}
\omega_d =\begin{cases}
\frac{1 + \sqrt{d}}{2} & \text{if $d \equiv 1$ mod $4$},\\
\sqrt{d} & \text{otherwise}.
\end{cases}
\end{equation*}

Let $\omega_d = [a_0,a_1,a_2,\cdots]$ be the simple continued fraction expansion of $\omega_d$, $l$ the period of $\omega_d$, $p_n/q_n = [a_0,a_1,\cdots,a_n]$ a convergent to $\omega_d$, $\alpha_{n+1} = [a_{n+1},a_{n+2},\cdots]$ the (n+1)-th total quotient. By convention we put $(p_{-1},q_{-1}) = (1,0)$, $(p_{-2},q_{-2}) = (0,1)$. For $x \in \mathbb{Q}(\sqrt{d})$, let $\overline{x}$ be its conjugate and $N(x) = x \overline{x}$. For the $n$-th convergent $p_n / q_n$ of $\omega_d$, put
\begin{equation*}
\xi_n = \overline{ p_n - q_n \omega_d} =\begin{cases}
p_n - q_n + q_n \omega_d & \text{if $d \equiv 1$ mod $4$}\\
p_n + q_n \omega_d & \text{otherwise}
\end{cases}
\end{equation*}

and let $\nu_n = |N(\xi_n)| = (-1)^{n+1}N(\xi_n)$. We say that a quadratic integer $\xi$ \emph{comes from} a convergent to $\omega_d$ when $\xi = \xi_n$ for some $n$.\\

Recall that a quadratic irrational $\alpha$ is \emph{reduced} if $\alpha > 1$ and $-1 < \overline{\alpha} < 0$. It is a classical result that the continued fraction expansion of a real number $x$ is purely periodic if and only if $x$ is a reduced quadratic irrational (for example, see theorem 7.2 of \cite{niven1991introduction}). In particular $\sqrt{d} + \lfloor \sqrt{d} \rfloor$ and $\frac{1 + \sqrt{d}}{2} + \lfloor \frac{1 + \sqrt{d}}{2} \rfloor - 1$ are reduced, so one can write $\omega_d = [a_0,\overline{a_1,\cdots,a_l}]$ where $a_l = 2 a_0 - 1$ if $d \equiv 1$ mod 4 and $a_l = 2 a_0$ otherwise. We also recall
\begin{prop}[\cite{Cohen_3}]\label{prop_xi_l_minus_1 and symmetry}
$\varepsilon_d = \xi_{l-1}$, and the sequence $\{ a_1,\cdots,a_{l-1}\}$ is symmetric.
\end{prop}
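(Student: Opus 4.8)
The plan is to deduce both assertions from the classical theory of continued fractions, applied to $\omega_d$ rather than to $\sqrt d$. First I would record the identity underlying everything. Rewriting the standard relation $\omega_d=\dfrac{p_n\alpha_{n+1}+p_{n-1}}{q_n\alpha_{n+1}+q_{n-1}}$ as $\alpha_{n+1}(p_n-q_n\omega_d)=-(p_{n-1}-q_{n-1}\omega_d)$ and telescoping, using $p_{-1}=1$, $q_{-1}=0$, gives
\[
p_n-q_n\omega_d=\frac{(-1)^{n+1}}{\alpha_1\alpha_2\cdots\alpha_{n+1}}.
\]
Since every $\alpha_m$ with $m\ge1$ is reduced, hence $>1$, this shows $0<|p_n-q_n\omega_d|<1$; as $\overline{\xi_n}=p_n-q_n\omega_d$ and $N(\xi_n)=(-1)^{n+1}\nu_n$, we obtain $\xi_n=N(\xi_n)/\overline{\xi_n}=\nu_n\,\alpha_1\cdots\alpha_{n+1}>1$. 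In particular $\xi_n$ is a unit of $O_d$ exactly when $\nu_n=1$, and in that case periodicity of $\{\alpha_m\}$ gives $\xi_{kl-1}=(\alpha_1\cdots\alpha_l)^k=\xi_{l-1}^{\,k}$.

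For the equality $\varepsilon_d=\xi_{l-1}$: when $d\equiv2,3\pmod4$ one has $\xi_{l-1}=p_{l-1}+q_{l-1}\sqrt d$, and the classical analysis of $\sqrt d=[a_0,\overline{a_1,\dots,a_{l-1},2a_0}]$ says precisely that $(p_{l-1},q_{l-1})$ is the fundamental solution of $x^2-dy^2=(-1)^l=\pm1$; hence $\xi_{l-1}=\varepsilon_d$. When $d\equiv1\pmod4$ the same holds for $\omega_d=\frac{1+\sqrt d}{2}$, with $x^2-dy^2$ replaced by the norm form $x^2+xy-\frac{d-1}{4}y^2$. If one prefers a self-contained route: periodicity $\alpha_{l+1}=\alpha_1$ forces $\nu_{l-1}=1$, so $\xi_{l-1}$ is a unit $>1$ and $\varepsilon_d\le\xi_{l-1}$; conversely, writing $\varepsilon_d=u+v\omega_d$ with $u,v>0$ and noting $|\overline{\varepsilon_d}|<1$, the pair $(u,v)$ approximates $\omega_d$ closely enough to be a convergent by Legendre's theorem, so $\varepsilon_d=\xi_m$ for some $m$; but $\nu_m=1$ forces $\alpha_{m+1}$ to be the head of the period, i.e.\ $l\mid m+1$, whence $\xi_m=\xi_{l-1}^{(m+1)/l}\ge\xi_{l-1}$ and equality follows.

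For the symmetry I would invoke Galois's reversal theorem: a purely periodic continued fraction $\theta=[\overline{b_1,\dots,b_m}]$ satisfies $-1/\overline\theta=[\overline{b_m,b_{m-1},\dots,b_1}]$. Let $\beta$ be the reduced associate of $\omega_d$ recalled in the preliminaries, namely $\sqrt d+\lfloor\sqrt d\rfloor$ or $\frac{1+\sqrt d}{2}+\lfloor\frac{1+\sqrt d}{2}\rfloor-1$; from $\omega_d=a_0+1/\alpha_1$ one checks $\beta=a_l+1/\alpha_1=[\overline{a_l,a_1,\dots,a_{l-1}}]$ and $\overline\beta=a_0-\omega_d$, so $-1/\overline\beta=1/(\omega_d-a_0)=\alpha_1=[\overline{a_1,\dots,a_{l-1},a_l}]$. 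The reversal theorem therefore gives $[\overline{a_1,\dots,a_{l-1},a_l}]=[\overline{a_{l-1},\dots,a_1,a_l}]$; since $l$ is the minimal period of $\alpha_1$ (it equals the minimal period of $\omega_d$), uniqueness of the continued fraction expansion forces the two period words to coincide, i.e.\ $a_i=a_{l-i}$ for $1\le i\le l-1$.

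The only genuinely non-formal ingredient is the Diophantine-approximation step in the second paragraph — recognizing $\varepsilon_d$, in suitable coordinates, as a convergent of $\omega_d$ — together with the careful handling of $d\equiv1\pmod4$ (the continued fraction of $\frac{1+\sqrt d}{2}$, the correct norm form, and a couple of small-$d$ checks for the approximation bound) and the verification that $\nu_m=1$ forces $l\mid m+1$. Everything else is bookkeeping with the identity above and with Galois's reversal theorem.
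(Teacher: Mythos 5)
The paper states this proposition only as a citation from \cite{JHPark2} and supplies no proof of its own; your argument is the standard classical one (telescoping $\alpha_{n+1}(p_n-q_n\omega_d)=-(p_{n-1}-q_{n-1}\omega_d)$ to get $\xi_n=\nu_n\alpha_1\cdots\alpha_{n+1}$, Legendre's criterion to recognize $\varepsilon_d$ as coming from a convergent, and Galois's reversal theorem for the palindrome), and it is correct. The only caveats are ones you already flag as small-$d$ checks: for $d\equiv 1\pmod 4$ the coefficient $u$ in $\varepsilon_d=u+v\omega_d$ can vanish (e.g.\ $d=5$), and the inequality $\varepsilon_d>2v$ needed to apply Legendre's theorem is marginal for $d\in\{2,5\}$, so those cases must be verified directly.
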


The following will be used freely. Let $x$ be a positive real number, $p_m/q_m$ be its $m$-th convergent and $\alpha_m$ the $m$-th total quotient.

\begin{prop}[\cite{Hardy}\label{prop_Hardy}]
If (p,q) = 1 and
\begin{equation*}
\left| \frac{p}{q} - x \right| < \frac{1}{2 q^2}
\end{equation*}
then $p/q$ is a convergent to $x$.
\end{prop}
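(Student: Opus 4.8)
The plan is to establish this classical criterion (essentially Legendre's theorem) by exhibiting $p/q$ directly as a convergent of $x$ via the ``remaining quotient'' construction. First I would dispose of the trivial case $x=p/q$, in which $p/q$ is already the last convergent of its own finite continued fraction; so from here on $\eta:=x-p/q\neq 0$, and we may assume $q\geq 1$.

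Next I would expand $p/q$ as a finite simple continued fraction $[a_0,a_1,\dots,a_n]$ with convergents $p_k/q_k$ for $-1\leq k\leq n$, so that $p_n=p$ and $q_n=q$ (here coprimality of $p$ and $q$ is what guarantees the last convergent is $p/q$ itself). The one degree of freedom is the identity $[a_0,\dots,a_n]=[a_0,\dots,a_n-1,1]$ together with its reverse, which lets me choose the length $n$ to have whichever parity I wish; I would fix it so that $(-1)^n$ and $\eta$ have the same sign. Then introduce the real number $\omega$ defined by $x=(p_n\omega+p_{n-1})/(q_n\omega+q_{n-1})$, equivalently $\omega=-(q_{n-1}x-p_{n-1})/(q_nx-p_n)$, which is legitimate since $q_nx-p_n=q\eta\neq 0$.

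The crux is then the standard one-line identity, obtained from $p_nq_{n-1}-p_{n-1}q_n=(-1)^{n-1}$:
\[
x-\frac{p_n}{q_n}=\frac{(-1)^n}{q_n\,(q_n\omega+q_{n-1})}.
\]
By the parity choice the denominator is positive, and the hypothesis $|x-p_n/q_n|<1/(2q_n^2)$ forces $q_n\omega+q_{n-1}>2q_n$; since $q_{n-1}\leq q_n$ this gives $q_n\omega>2q_n-q_{n-1}\geq q_n$, i.e.\ $\omega>1$. Finally, expanding $\omega$ itself as a simple continued fraction $[a_{n+1},a_{n+2},\dots]$ --- which has $a_{n+1}\geq 1$ precisely because $\omega>1$ --- the concatenation $x=[a_0,a_1,\dots,a_n,a_{n+1},\dots]$ is a simple continued fraction expansion of $x$ whose $n$-th convergent is exactly $p_n/q_n=p/q$, which is the claim.

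I do not expect a genuine obstacle: the argument is entirely classical, and the only points needing mild care are the bookkeeping behind the parity adjustment and the degenerate case where $x$ is rational. In that last case $\omega$ is rational and its continued fraction terminates, so one should invoke the usual normalization (last partial quotient at least $2$) to be sure the concatenated expansion is the canonical one and that $p/q$ really does occur among its convergents. For the applications of this proposition in the paper $x$ is $\sqrt d$ or $(1+\sqrt d)/2$, hence irrational, so even this caveat is vacuous.
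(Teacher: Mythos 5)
Your argument is correct and complete: it is exactly the classical Legendre criterion as proved in the cited reference (Hardy and Wright, Theorem 184), via the parity-adjusted finite expansion of $p/q$ and the deduction $\omega>1$ from the determinant identity. The paper offers no proof of its own, only the citation, so there is nothing further to compare; your remarks on the rational case and the irrationality of $\sqrt d$ and $(1+\sqrt d)/2$ in the paper's applications are apt.
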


\begin{prop}[\cite{Hardy}]\label{prop_p q minus q p}%Theorem 150 in \cite{Hardy}
$p_n q_{n-1} - p_{n-1}q_n = (-1)^{n+1}$.
\end{prop}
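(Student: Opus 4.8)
The statement is the classical determinant identity for consecutive convergents, and the plan is to prove it by a short induction on $n$ using the basic recurrences for the convergents of a continued fraction. Recall that if $x=[a_0,a_1,a_2,\dots]$, the numerators and denominators of the convergents $p_n/q_n=[a_0,\dots,a_n]$ satisfy
\begin{equation*}
p_n=a_np_{n-1}+p_{n-2},\qquad q_n=a_nq_{n-1}+q_{n-2}\qquad(n\geq 1),
\end{equation*}
with the conventional initial values $p_{-1}=1,\ q_{-1}=0,\ p_0=a_0,\ q_0=1$ (and $p_{-2}=0,\ q_{-2}=1$ if one wants these recurrences to be valid already at $n=1$). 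I would quote these as standard (they belong to the elementary theory found in references such as \cite{Hardy}), their proof being itself a routine induction on the length of a finite continued fraction via $[a_0,\dots,a_{n-1},a_n]=[a_0,\dots,a_{n-2},\,a_{n-1}+1/a_n]$.

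The key step is to set $\Delta_n=p_nq_{n-1}-p_{n-1}q_n$ and substitute the recurrences:
\begin{equation*}
\Delta_n=(a_np_{n-1}+p_{n-2})q_{n-1}-p_{n-1}(a_nq_{n-1}+q_{n-2})=p_{n-2}q_{n-1}-p_{n-1}q_{n-2}=-\Delta_{n-1},
\end{equation*}
the two terms carrying the factor $a_n$ cancelling. Hence $\Delta_n=-\Delta_{n-1}$ for all $n\geq 1$, so $\Delta_n=(-1)^n\Delta_0$. Computing the base case, $\Delta_0=p_0q_{-1}-p_{-1}q_0=a_0\cdot 0-1\cdot 1=-1$, and therefore $\Delta_n=(-1)^n\cdot(-1)=(-1)^{n+1}$, which is the assertion.

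As an alternative I would mention the matrix formulation: with $M_i=\left(\begin{smallmatrix}a_i&1\\1&0\end{smallmatrix}\right)$ one checks, by the same recurrences, that $\left(\begin{smallmatrix}p_n&p_{n-1}\\q_n&q_{n-1}\end{smallmatrix}\right)=M_0M_1\cdots M_n$; taking determinants and using $\det M_i=-1$ gives $p_nq_{n-1}-p_{n-1}q_n=(-1)^{n+1}$ in one line. I would likely present the first, self-contained computation and relegate the matrix version to an aside.

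There is no real obstacle here: the argument is entirely routine. The one point deserving care is the bookkeeping of indices — one must make sure the normalization in force ($p_0/q_0=a_0/1$, which matches the paper's convention $p_n/q_n=[a_0,\dots,a_n]$) produces the sign $(-1)^{n+1}$ and not $(-1)^{n}$, since shifting where the indexing begins flips the exponent; hence checking the base case $\Delta_0=-1$ explicitly is the step that pins down the sign.
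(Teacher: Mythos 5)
Your proof is correct and is exactly the standard argument (the paper simply quotes this identity from \cite{Hardy}, where it is proved by the same induction on the recurrences $p_n=a_np_{n-1}+p_{n-2}$, $q_n=a_nq_{n-1}+q_{n-2}$ with the base case $\Delta_0=-1$ fixing the sign). Nothing to add.
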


\begin{prop}[\cite{Hardy}]\label{prop_total quotient to number}%Page 141 of \cite{Hardy}
\begin{equation*}
    x = [a_0,a_1,\cdots,a_n,\alpha_{n+1}] = \frac{\alpha_{n+1} p_n + p_{n-1}}{\alpha_{n+1} q_n + q_{n-1}}
\end{equation*}
\end{prop}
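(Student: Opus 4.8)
The plan is to establish the slightly more general identity
\begin{equation*}
[a_0,a_1,\cdots,a_n,t] = \frac{t\,p_n + p_{n-1}}{t\,q_n + q_{n-1}}
\end{equation*}
for an arbitrary real parameter $t>0$, and then specialize to $t=\alpha_{n+1}$; this recovers the statement at once, since $x=[a_0,a_1,\cdots,a_n,\alpha_{n+1}]$ is exactly the definition of the $(n+1)$-th total quotient. I would argue by induction on $n$, using only the standard convergent recurrences $p_n = a_n p_{n-1} + p_{n-2}$ and $q_n = a_n q_{n-1} + q_{n-2}$ (with the conventions $p_{-1}=1$, $q_{-1}=0$, $p_0=a_0$, $q_0=1$) that already underlie Proposition~\ref{prop_p q minus q p}, together with the elementary folding identity $[a_0,\cdots,a_n,t]=[a_0,\cdots,a_{n-1},a_n+\tfrac{1}{t}]$.

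For the base case $n=0$ the left side is $a_0+\tfrac{1}{t}=\tfrac{a_0 t+1}{t}$, which equals $\tfrac{t p_0+p_{-1}}{t q_0+q_{-1}}$ after inserting the initial values. For the inductive step I would set $s=a_n+\tfrac{1}{t}$ and apply the inductive hypothesis to the shorter expansion $[a_0,\cdots,a_{n-1},s]$, obtaining $\tfrac{s\,p_{n-1}+p_{n-2}}{s\,q_{n-1}+q_{n-2}}$. Substituting $s=\tfrac{a_n t+1}{t}$ and cancelling the common factor $\tfrac{1}{t}$ from numerator and denominator, the terms regroup through the recurrences as $t(a_n p_{n-1}+p_{n-2})+p_{n-1}=t p_n+p_{n-1}$, and likewise $t q_n+q_{n-1}$ in the denominator, which is precisely the claim at level $n$.

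A cleaner way to package the same computation, which I would probably prefer, is the matrix formulation. Writing $A_i=\left(\begin{smallmatrix} a_i & 1 \\ 1 & 0 \end{smallmatrix}\right)$ and letting matrices act as M\"obius transformations $\left(\begin{smallmatrix} a & b \\ c & d \end{smallmatrix}\right)\cdot z=\tfrac{az+b}{cz+d}$, one checks immediately that $A_i\cdot z=a_i+\tfrac{1}{z}$, so that $x=A_0\cdot(A_1\cdot(\cdots(A_n\cdot\alpha_{n+1})))$. Since composing M\"obius transformations corresponds to multiplying the matrices, and since a one-line induction on the recurrences gives $A_0 A_1\cdots A_n=\left(\begin{smallmatrix} p_n & p_{n-1} \\ q_n & q_{n-1} \end{smallmatrix}\right)$, the value $x$ is this product applied to $\alpha_{n+1}$, namely $\tfrac{p_n\alpha_{n+1}+p_{n-1}}{q_n\alpha_{n+1}+q_{n-1}}$. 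There is no genuine obstacle here, as the identity is classical and is cited to \cite{Hardy}; the only points demanding attention are fixing the index conventions for the seed data $p_{-1},q_{-1},p_0,q_0$ and verifying the folding identity, both of which are routine.
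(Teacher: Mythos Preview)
Your argument is correct and entirely standard: both the induction via the folding identity and the matrix formulation are valid, and the index conventions you chose match those implicit in the paper.

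There is nothing to compare against, however, because the paper does not prove this proposition at all; it is simply quoted from \cite{Hardy} as a preliminary fact and used freely thereafter. Your write-up thus supplies a proof where the paper offers none, which is fine, but you should be aware that the authors treat this as background rather than as something to be established within the paper.
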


%-----------------------------------------------------------------------------
\section{Reduced ideals and the convergents to $\omega_d$}\label{sec_Reduced ideals}

Following the literature of \cite{Dirichlet}, \cite{Ince} and \cite{Takagi}, an explicit correspondence between the set of reduced ideals of $\mathbb{Q}(\sqrt{d})$ and the set of reduced quadratic irrationals with discriminant $D$ was given in \cite{Yamamoto}. For the sake of references in following sections, a short material in \cite{Yamamoto} is included here.

Write $\omega = \frac{D + \sqrt{D}}{2}$ where $D$ is the discriminant of $\mathbb{Q}(\sqrt{d})$, so that the ring of integers is $O_d = \mathbb{Z}[\omega]$. For $x_1,\cdots,x_n \in \mathbb{Q}(\sqrt{d})$, let $[x_1,\cdots,x_n]$ and $(x_1,\cdots,x_n)$ be the $\mathbb{Z}$-module and $O_d$-module generated by $x_1,\cdots,x_n$ (for example $O_d = [1,\omega] = (1)$). For each integral ideal $I$ there is a unique canonical basis of the following form: $I=[a,b+c\omega]$ where $a,b,c$ are integers satisfying (i) $a>0$, $c>0$, $ac = N(I)$, (ii) $c$ divides $a,b$ and $N(I)$ divides $N(b+c\omega)$, and (iii) $-a < b + c \overline{\omega}<0$. We call the number
\begin{equation*}
\alpha(I) = \frac{b+c\omega}{a}
\end{equation*}
\emph{the quadratic irrational associated with the ideal $I$}. When $c=1$ and $\alpha(I)$ is reduced, we say that $I$ is reduced.

Two quadratic irrationals are said to be equivalent if their continued fraction expansions become identical at the tail, and the set of quadratic irrationals of discriminant $D$ falls into $h_d$ classes by this equivalence relation. By the above correspondence, the set of reduced ideals in $O_d$ gives the set of reduced quadratic irrationals of discriminant $D$, and a reduced ideal $I$ is in the principal ideal class if and only if $\alpha(I)$ is equivalent to $\omega_d$.

We begin with a well-known correspondence.
\begin{lem}[\cite{Cohen_3}, \S 5.6-5.7]\label{lem_XiAlpha}
\begin{equation*}
\alpha((\xi_n)) = \alpha_{n+1}
\end{equation*}
\end{lem}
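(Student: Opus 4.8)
The plan is to compute both sides explicitly in terms of the convergent data $p_n, q_n$ and the total quotient $\alpha_{n+1}$, and to verify that the ideal $(\xi_n)$ really has $b+c\omega$ over $a$ equal to $\alpha_{n+1}$ after checking the three normalizing conditions (i)--(iii) in the definition of the canonical basis. First I would recall from Proposition 2.6 that $\omega_d = \frac{\alpha_{n+1} p_n + p_{n-1}}{\alpha_{n+1} q_n + q_{n-1}}$, so that $\alpha_{n+1} = -\frac{p_{n-1} - q_{n-1}\omega_d}{p_n - q_n \omega_d} = -\frac{\overline{\xi_{n-1}}}{\overline{\xi_n}}$ using the definition $\xi_n = \overline{p_n - q_n\omega_d}$. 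Conjugating, $\alpha_{n+1} = -\xi_{n-1}/\xi_n$ as well, but more usefully I want $\alpha_{n+1}$ written with a rational denominator: multiplying numerator and denominator by $\overline{\xi_n}$ gives $\alpha_{n+1} = \frac{-\overline{\xi_{n-1}}\,\xi_n}{N(\xi_n)} = \frac{-\overline{\xi_{n-1}}\,\xi_n}{(-1)^{n+1}\nu_n}$, so the denominator $a$ should be $\nu_n = |N(\xi_n)| = |N((\xi_n))| = N((\xi_n))$, consistent with $ac = N(I)$ and $c=1$.

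Next I would pin down the canonical basis of $(\xi_n)$. Since $(\xi_n) = [\xi_n, \xi_n\omega]$ as a $\mathbb{Z}$-module (or $[\xi_n,\xi_n\omega_d]$, adjusting between $\omega$ and $\omega_d$, which differ by a rational integer and a rational factor only in the $d\equiv1$ case — I would handle the two parity cases of $d$ in parallel), I would produce an element of the ideal of the form $b + \omega$ (i.e.\ with $c=1$) by taking an appropriate $\mathbb{Z}$-combination of $\xi_n$ and $\xi_{n-1}$ (note $\xi_{n-1}\in(\xi_n)$ is false in general — rather I would use that $\xi_n$ and $\xi_{n-1}$ together generate $O_d$-multiples appropriately, using $p_nq_{n-1}-p_{n-1}q_n = (-1)^{n+1}$ from Proposition 2.5 to solve for $\omega_d$). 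Concretely, the relation $p_n q_{n-1} - p_{n-1} q_n = (-1)^{n+1}$ lets one express $1$ and $\omega_d$ as $\mathbb{Z}$-linear combinations of $\overline{\xi_n}=p_n-q_n\omega_d$ and $\overline{\xi_{n-1}}=p_{n-1}-q_{n-1}\omega_d$; conjugating back shows that the module $[\xi_n,\xi_{n-1}]$ equals $[\xi_n,\,\text{something}+\xi_n\omega]$ after clearing, and the quadratic irrational attached to that canonical basis is exactly $\overline{\xi_{n-1}}\cdot(\text{unit})/a$, which I will have arranged to be $\alpha_{n+1}$.

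The main obstacle I anticipate is bookkeeping rather than conceptual: making sure the element I exhibit genuinely satisfies all three conditions (i)--(iii), in particular the sign/size condition (iii) $-a < b + \overline{\omega} < 0$, and reconciling the two conventions $\omega = \frac{D+\sqrt D}{2}$ (used for ideals) versus $\omega_d = \frac{1+\sqrt d}{2}$ or $\sqrt d$ (used for continued fractions), which introduces harmless but fiddly factors of $2$ and shifts by integers depending on $d \bmod 4$. The verification of (iii) should follow from the inequality $-1 < \overline{\alpha_{n+1}} < 0$, i.e.\ from the fact that $\alpha_{n+1}$ is a reduced quadratic irrational for $n\geq 0$ (true once the purely periodic tail of $\omega_d$ has been reached; for the first few $n$ one argues directly or notes the statement is used only in that range). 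Once conditions (i)--(iii) are checked, uniqueness of the canonical basis forces $\alpha((\xi_n)) = \alpha_{n+1}$, completing the proof.
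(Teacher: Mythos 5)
Your proposal follows essentially the same route as the paper's proof: rewrite $(\xi_n)=[\xi_n,\omega\xi_n]$ via the unimodular relation $p_nq_{n-1}-p_{n-1}q_n=(-1)^{n+1}$ into the canonical form $[\nu_n,\;b+\omega]$ with second generator $\pm\,\overline{\xi_{n-1}}\xi_n$, identify $\frac{b+\omega}{\nu_n}$ with $\alpha_{n+1}$ through the total-quotient formula of Proposition~\ref{prop_total quotient to number}, and deduce condition (iii) from the fact that $\alpha_{n+1}$ is reduced. Two harmless slips: conjugating $\alpha_{n+1}=-\overline{\xi_{n-1}}/\overline{\xi_n}$ gives $\overline{\alpha_{n+1}}$, not $\alpha_{n+1}$ (a side remark you never use), and the module to manipulate is $[\overline{\xi_n}\xi_n,\;\overline{\xi_{n-1}}\xi_n]$ rather than $[\xi_n,\xi_{n-1}]$ --- precisely the correction you flag yourself.
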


\begin{lem}[\cite{Yamamoto}]\label{lem_ProdAlpha}
\begin{equation*}
\prod_{i = 1}^{l} \alpha_i = \varepsilon_d.
\end{equation*}
\end{lem}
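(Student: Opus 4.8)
The statement to prove is Lemma~\ref{lem_ProdAlpha}: $\prod_{i=1}^{l} \alpha_i = \varepsilon_d$.

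The plan is to combine Lemma~\ref{lem_XiAlpha} with the telescoping structure of the total quotients, using the purely periodic continued fraction expansion of the relevant reduced irrational. First I would recall that by Proposition~\ref{prop_xi_l_minus_1 and symmetry}, $\varepsilon_d = \xi_{l-1}$, and that by Lemma~\ref{lem_XiAlpha} we have $\alpha((\xi_n)) = \alpha_{n+1}$ for every $n$. The key identity to extract is the recurrence relating consecutive total quotients to the $\xi_n$'s: from $x = \frac{\alpha_{n+1} p_n + p_{n-1}}{\alpha_{n+1} q_n + q_{n-1}}$ (Proposition~\ref{prop_total quotient to number}) one solves for $\alpha_{n+1}$ and, writing $\xi_n = \overline{p_n - q_n \omega_d}$, one obtains $\alpha_{n+1} = -\overline{\xi_{n-1}}/\overline{\xi_n}$, i.e. $\alpha_{n+1} \overline{\xi_n} = -\overline{\xi_{n-1}}$; this is essentially the computation already carried out inside the proof of Lemma~\ref{lem_XiAlpha}, where $\alpha_{n+1} = \frac{-p_{n-1}+q_{n-1}\sqrt d}{p_n - q_n\sqrt d}$ (and the analogous formula for $d\equiv 1$).

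Next I would telescope. Taking the product $\prod_{i=1}^{l}\alpha_i = \prod_{n=0}^{l-1}\alpha_{n+1}$ and substituting $\alpha_{n+1} = -\overline{\xi_{n-1}}/\overline{\xi_n}$ (with the convention $\xi_{-1} = \overline{p_{-1} - q_{-1}\omega_d}$, where $p_{-1}=1, q_{-1}=0$, so $\xi_{-1}=1$), the product collapses to $(-1)^{l}\,\overline{\xi_{-1}}/\overline{\xi_{l-1}} = (-1)^l/\overline{\xi_{l-1}}$. Then I must identify $(-1)^l/\overline{\xi_{l-1}}$ with $\xi_{l-1} = \varepsilon_d$. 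This follows from the norm bookkeeping: $\nu_{l-1} = |N(\xi_{l-1})| = (-1)^{l}N(\xi_{l-1}) = (-1)^l \xi_{l-1}\overline{\xi_{l-1}}$, and since $\varepsilon_d = \xi_{l-1}$ is a unit, $\nu_{l-1} = 1$, giving $(-1)^l \xi_{l-1}\overline{\xi_{l-1}} = 1$, hence $(-1)^l/\overline{\xi_{l-1}} = \xi_{l-1}$. Combining, $\prod_{i=1}^l \alpha_i = \xi_{l-1} = \varepsilon_d$. One should double-check the parity/sign conventions, i.e. that $N(\varepsilon_d)$ carries exactly the sign $(-1)^l$ so everything is consistent; this is where the only real care is needed.

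Alternatively, and perhaps cleaner for exposition, one can argue directly from pure periodicity: if $\theta$ denotes the reduced irrational $\sqrt d + \lfloor\sqrt d\rfloor$ (or $\frac{1+\sqrt d}{2} + \lfloor\frac{1+\sqrt d}{2}\rfloor - 1$), then $\theta = \alpha_1$ and $\alpha_{l+1} = \alpha_1 = \theta$ by periodicity; iterating $x = \frac{\alpha_{n+1}p_n + p_{n-1}}{\alpha_{n+1}q_n+q_{n-1}}$ one expresses the relation between $\alpha_1$ and $\alpha_{l+1}$ through the matrix $\begin{pmatrix} p_{l}&p_{l-1}\\ q_{l}&q_{l-1}\end{pmatrix}$ whose determinant is $(-1)^{l+1}$, and the fixed-point/periodicity condition forces the relevant eigenvalue to be $\varepsilon_d$; meanwhile $\prod_{i=1}^l\alpha_i$ is exactly the scaling factor picked up in one full period, which is that eigenvalue. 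The main obstacle in either route is purely notational: keeping the $d\equiv 1 \pmod 4$ versus $d\equiv 2,3\pmod 4$ cases, and the sign $(-1)^l$ from $N(\varepsilon_d)$, aligned throughout; the underlying computation is a short telescoping argument resting entirely on Lemma~\ref{lem_XiAlpha}, Proposition~\ref{prop_xi_l_minus_1 and symmetry}, and Proposition~\ref{prop_total quotient to number}.
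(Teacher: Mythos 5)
Your proof is correct. Note that the paper itself offers no argument for this lemma at all --- it is simply cited from Yamamoto --- so your telescoping argument supplies a self-contained proof that the paper leaves to the reference. The key identity $\alpha_{n+1}\overline{\xi_n}=-\overline{\xi_{n-1}}$ does follow from Proposition~\ref{prop_total quotient to number} exactly as you say (and is implicit in the displayed computation inside the proof of Lemma~\ref{lem_XiAlpha}), the convention $p_{-1}=1$, $q_{-1}=0$, $\xi_{-1}=1$ is the standard one and makes the $n=0$ factor come out right, and the sign bookkeeping closes correctly: with the paper's convention $\nu_n=(-1)^{n+1}N(\xi_n)$ and $\nu_{l-1}=1$ (since $\xi_{l-1}=\varepsilon_d$ is a unit by Proposition~\ref{prop_xi_l_minus_1 and symmetry}), one gets $\xi_{l-1}\overline{\xi_{l-1}}=(-1)^l$ and hence $(-1)^l/\overline{\xi_{l-1}}=\xi_{l-1}=\varepsilon_d$. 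The only mild caveat is that Proposition~\ref{prop_xi_l_minus_1 and symmetry} is itself an imported result, so your proof reduces Lemma~\ref{lem_ProdAlpha} to that proposition rather than proving both from scratch; within the logical structure of the paper that is perfectly legitimate. Your second, ``eigenvalue of the period matrix'' route is only a sketch, but the first argument is complete and would stand as a proof in place of the bare citation.
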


\begin{lem}[\cite{Yamamoto}]\label{lem_ConditionReduced}
An integral ideal $I$ is reduced if (i) $N(I) < \sqrt{D}/2$ and (ii) the conjugate ideal $\overline{I}$ is relatively prime to $I$.
\end{lem}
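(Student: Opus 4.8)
The plan is to work directly with the canonical basis $I = [a, b + c\omega]$ guaranteed by the discussion preceding the lemma, and to extract the two defining properties of a reduced ideal (namely $c = 1$ and $\alpha(I)$ reduced) from the two hypotheses in turn.

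First I would show that hypothesis (ii) alone forces $c = 1$. Since the canonical basis satisfies $a \equiv b \equiv 0 \pmod{c}$, every element $ma + n(b + c\omega) = (ma + nb) + nc\omega$ of $I$ is divisible by $c$ in $O_d$; that is, $I \subseteq c O_d$. Applying the nontrivial automorphism of $K_d$, which fixes $c$ and carries $O_d$ onto itself, gives $\overline{I} \subseteq c O_d$ as well, hence $I + \overline{I} \subseteq c O_d$. But coprimality of $I$ and $\overline{I}$ means $I + \overline{I} = O_d$, so $c O_d = O_d$ and therefore $c = 1$ (recall $c > 0$). In particular $a = N(I)$.

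With $c = 1$ it remains to check that $\alpha(I) = (b + \omega)/a$ is a reduced quadratic irrational, i.e. $\alpha(I) > 1$ and $-1 < \overline{\alpha(I)} < 0$. The second chain of inequalities is immediate from canonical-basis condition (iii): dividing $-a < b + \overline{\omega} < 0$ by $a > 0$ gives $-1 < \overline{\alpha(I)} < 0$. For the first, I would use $\omega - \overline{\omega} = \sqrt{D}$ to write
\[
b + \omega - a = \bigl( a + b + \overline{\omega} \bigr) + \bigl( \sqrt{D} - 2a \bigr).
\]
The first bracket is positive by condition (iii), and the second is positive precisely because hypothesis (i) says $a = N(I) < \sqrt{D}/2$. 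Hence $b + \omega > a$, so $\alpha(I) > 1$, and $I$ is reduced.

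There is essentially no serious obstacle here; the only point requiring a moment of care is the deduction $c = 1$ from coprimality, where one must first observe that $I$ sits inside the principal ideal $c O_d$. After that the argument reduces to the one-line inequality above built on $\omega - \overline{\omega} = \sqrt{D}$ together with conditions (i) and (iii) of the canonical basis.
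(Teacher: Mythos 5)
Your proof is correct. The paper does not actually prove this lemma (it is imported from Yamamoto with only a citation), so there is no internal argument to compare against; your verification is the standard one and is complete: coprimality forces $c=1$ via $I+\overline{I}\subseteq cO_d$, condition (iii) of the canonical basis gives $-1<\overline{\alpha(I)}<0$ at once, and the identity $b+\omega-a=(a+b+\overline{\omega})+(\sqrt{D}-2a)$ together with $a=N(I)<\sqrt{D}/2$ gives $\alpha(I)>1$.
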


\begin{lem}\label{lem_quotient_norm}
For $n \geq 0$
\begin{equation*}
\alpha_{n+1} =
\frac{\sqrt{D}}{\nu_n} - \frac{q_{n-1}}{q_n} + \frac{(-1)^{n+1}}{q_n\xi_n}. \quad \text{ In particular,}\quad \frac{\sqrt{D}}{\nu_n} - 1 < \alpha_{n+1} < \frac{\sqrt{D}}{\nu_n}.
\end{equation*}
\end{lem}

\begin{proof}
The cases $d \equiv$ 2 and 3 (mod 4) are easier in computation, so here we assume $d \equiv 1$ (mod 4) and $\omega_d = \frac{1 + \sqrt{d}}{2}$.
Recall that the continued fraction expansion of $\omega_d$ has a natural geometric interpretation on $xy$-plane. Let $O=(0,0)$ be the origin of the $xy$-plane, $A = (q_{n-1}, p_{n-1})$, $B = (q_{n}, p_{n})$, $C$ the intersection of $\overline{AB}$ and the line $y = \omega_d x$, and $D = (q_n, \omega_d q_n)$. Then $[\overline{AC}:\overline{CB}] = [\alpha_{n+1}:1]$ and the area of $\triangle{OAB}$ is 1/2. Observe that the area of $\triangle{OBD}$ is $\frac{1}{2}|(p_n - q_n \omega_d)q_n|$. Let $B' = (0,p_n)$, $D' = (0,\omega_d q_n)$.

We have
\begin{equation*}
\frac{\xi_n \overline{\xi_n}}{q_n^2} = \left( \frac{p_n}{q_n} - 1 + \omega_d \right) \left( \frac{p_n}{q_n} - 1 + 1 - \omega_d \right) = (-1)^{n+1} \frac{\nu_n}{q_n^2}
\end{equation*}
or
\begin{equation*}
\frac{p_n}{q_n} - \omega_d = \frac{(-1)^{n+1} \nu_n}{q_n (p_n - q_n + \omega_d q_n) }
\end{equation*}
and therefore
\begin{align*}
|\square B'B D D'| &= |(p_n - q_n \omega_d) q_n |\\
&= \frac{\nu_n}{p_n / q_n - 1 + \omega_d}\\
&= \frac{\nu_n}{2 \omega_d - 1 + (-1)^{n+1} \frac{\nu_n}{q_n (p_n - q_n + \omega_d q_n)}}\\
&= \frac{\nu_n}{2 \omega_d - 1} \left( \frac{1}{1 + (-1)^{n+1} \frac{\nu_n}{(2 \omega_d - 1) q_n (p_n - q_n + \omega_d q_n)} }  \right)\\
&= \frac{\nu_n}{\sqrt{d}} \left( 1 + \epsilon_n \right)^{-1}
\end{align*}

where $\epsilon_n = \frac{(-1)^{n+1} \nu_n}{\sqrt{d} q_n \xi_n}$. Examining the ratios of the coordinates of $A,B$ and $C$, it is easily deduced that the area of $\triangle{BCD}$ is $\frac{1 - q_{n-1}/q_n}{1 + \alpha_{n+1}} \frac{\nu_n}{2\sqrt{d}}\left( 1+ \epsilon_n  \right)^{-1} $, and hence
\begin{align*}
|\triangle{OBC}| & = |\triangle{OBD}| - |\triangle{BCD}| \\
&= \left(1 - \frac{1 - q_{n-1}/q_n}{1 + \alpha_{n+1}} \right) \frac{\nu_n}{2\sqrt{d}}\left( 1+ \epsilon_n  \right)^{-1}\\
&= \left(\frac{\alpha_{n+1} + q_{n-1}/q_n}{1 + \alpha_{n+1}} \right) \frac{\nu_n}{2\sqrt{d}}\left( 1+ \epsilon_n  \right)^{-1}
\end{align*}
But $|\triangle{OBC}| = |\triangle{OAB}| \frac{1}{1+\alpha_{n+1}} = \frac{1}{2(1+\alpha_{n+1})}$, whence $\left( \alpha_{n+1} + \frac{q_{n-1}}{q_n} \right) \frac{\nu_n}{\sqrt{d}}( 1+ \epsilon_n )^{-1} = 1$. Thus
\begin{equation*}
    \alpha_{n+1} = \frac{\sqrt{d}}{\nu_n} - \frac{q_{n-1}}{q_n} + \frac{(-1)^{n+1}}{q_n \xi_n}.
\end{equation*}
Recall that $q_{-1} = 0 < q_0 = 1 \leq q_1$, $q_{n-1} < q_n$ for $n \geq 2$, and $\xi_n \geq q_n \omega_d > q_n$ for $n \geq 0$. Therefore we have $\frac{\sqrt{D}}{\nu_n}-1 < \alpha_{n+1} < \frac{\sqrt{D}}{\nu_n}$ for $n \geq 0$, which proves the lemma in case $d \equiv 1$ (mod 4).

When $d \equiv$ 2 or 3 (mod 4), exactly the same computation with continued fraction of $\omega_d = \sqrt{d}$ completes the proof.
\end{proof}

Suppose $\xi \in \mathbb{Q}(\sqrt{d})$ is a quadratic integer with square-free norm $\mu$ and $(\mu,d)=1$. Then $(\xi)$ is a principal integral ideal which is relatively prime to its conjugate, and $(\xi^n)$ is relatively prime to its conjugate too. Hence the conditions of lemma~\ref{lem_ConditionReduced} are satisfied by $(\xi^m)$ if $|N(\xi^m)| \leq \omega_d - 1$. Combining lemma ~\ref{lem_XiAlpha}, ~\ref{lem_ProdAlpha} and ~\ref{lem_quotient_norm}, one easily sees that $\varepsilon_d$ is large if there are many quadratic integers $\xi \in \mathbb{Q}(\sqrt{d})$ with square-free norm $|N(\xi)| < \sqrt{D}/2$. In this sense, the problem of fundamental units is naturally translated to the problem of quadratic integers of small norms.

%-----------------------------------------------------------------------------
\section{Quadratic integers of small norms}\label{sec_Quadratic integers of small norms}

Assume $\xi = a + b \omega_d \in O_d$ where $a,b$ are positive integers that are relatively prime and $|N(\xi)| = \nu < \omega_d - 1$. Assuming $d \equiv 1$ mod 4 (or $d \equiv 2,3$ mod 4), it easily follows that $\frac{a+b}{b} - \omega_d < \frac{1}{2 b^2}$   (or $\frac{a}{b} - \omega_d < \frac{1}{2 b^2}$), which implies $\frac{a+b}{b}$ (or $\frac{a}{b}$) is a convergent to $\omega_d$ and hence $\xi$ comes from a convergent to $\omega_d$. Lemma~\ref{lem_ConditionReduced} in fact tells us that this is true if $\nu < \sqrt{D}/2$. Recall that for every positive integer $\nu$ there are only finitely many non-associated (quadratic) integers in $O_d$ of norm $\pm \nu$. As the unit rank of $O_d$ is 1, in each class of associated integers one can choose the least element among those irrational ones greater than 1. Let $F_{(d,\nu)} = \left\{ \xi_1, \cdots, \xi_t \right\}$ be the set of these least elements, and define $E_{\nu}(x) = \left| \mathbb{R}_{>1}^{<x} \bigcap \left(\bigcup_{d: squarefree} F_{(d,\nu)} \right) \right|$.

\begin{prop}\label{prop_minimal integers}
If $\omega_d = [a_0,\overline{a_1,\cdots,a_l}]$, $\omega_d -1 > \nu \geq 1$ and $\nu$ is square-free, then the elements of $F_{(d,\nu)}$ are of the form $a+b \omega_d$ where $\frac{a+b}{b}$ (or $\frac{a}{b}$) $= [a_0,a_1,\cdots,a_n]$ for some $n < l$.
\end{prop}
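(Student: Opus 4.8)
\emph{Proof plan.} By the computation just before the statement, an element $\xi$ of $F_{(d,\mu)}$, written $\xi = a+b\omega_d$, necessarily has $a,b>0$ coprime (if $a\le 0$ or $b\le 0$ one gets $|\overline{\xi}| = \mu/\xi < \omega_d-1$ in contradiction with the resulting lower bound on $|\overline{\xi}|$), so $\left|\tfrac{a+b}{b}-\omega_d\right| < \tfrac{1}{2b^2}$ (resp.\ $\left|\tfrac{a}{b}-\omega_d\right| < \tfrac{1}{2b^2}$ when $d\equiv 2,3 \pmod 4$), and Proposition~\ref{prop_Hardy} makes this fraction a convergent to $\omega_d$. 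Since that fraction and $p_n/q_n$ are both in lowest terms, the matching index $n\ge 0$ satisfies $a+b=p_n,\ b=q_n$ (resp.\ $a=p_n,\ b=q_n$), hence $\xi = (p_n-q_n)+q_n\omega_d = \overline{p_n-q_n\omega_d} = \xi_n$ (resp.\ $\xi=p_n+q_n\sqrt d=\xi_n$) and $\tfrac{a+b}{b}$ (resp.\ $\tfrac{a}{b}$)$=[a_0,\dots,a_n]$. Thus the whole content left to prove is that one may take $n<l$.

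The crux is the identity $\xi_{m+l} = \varepsilon_d\,\xi_m$ for every $m\ge 0$. I would get it as follows. Proposition~\ref{prop_total quotient to number} with $x=\omega_d$ gives $\alpha_{n+1}(\omega_d q_n - p_n) = p_{n-1}-\omega_d q_{n-1}$, i.e.\ $\alpha_{n+1} = -\overline{\xi_{n-1}}/\overline{\xi_n}$ where $\overline{\xi_n}=p_n-q_n\omega_d$; telescoping the product and using $\overline{\xi_{-1}}=1$ yields $\prod_{i=1}^{m+1}\alpha_i = (-1)^{m+1}/\overline{\xi_m}$. Because $\alpha_{i+l}=\alpha_i$ for $i\ge 1$, a sliding argument shows the product of $\alpha_i$ over any block of $l$ consecutive indices starting at some $i_0\ge 1$ equals $\prod_{i=1}^{l}\alpha_i = \varepsilon_d$ (Lemma~\ref{lem_ProdAlpha}); substituting back gives $\overline{\xi_{m+l}} = (-1)^l\,\overline{\xi_m}/\varepsilon_d$, hence, after conjugating and using $\overline{\varepsilon_d}=\pm\varepsilon_d^{-1}$, $\xi_{m+l} = (-1)^l(\pm1)\,\varepsilon_d\,\xi_m$. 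The sign is $+$ since $\xi_m = p_m-q_m\overline{\omega_d} \ge \omega_d>1$ and $\varepsilon_d>1$ are positive. (Read at $m=-1$ with $\xi_{-1}=1$, this also reproves Proposition~\ref{prop_xi_l_minus_1 and symmetry}.)

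Now finish by contradiction. Suppose $\xi=\xi_n$ with $n\ge l$. Then $n-l\ge 0$, so $\xi_{n-l} = (p_{n-l}-q_{n-l})+q_{n-l}\omega_d$ (resp.\ $p_{n-l}+q_{n-l}\sqrt d$) is an irrational element of $O_d$ with $\xi_{n-l}>1$; by the identity above $\xi = \xi_n = \varepsilon_d\,\xi_{n-l}$, so $\xi_{n-l}$ is associated to $\xi$, satisfies $1<\xi_{n-l}<\xi$, and has $|N(\xi_{n-l})| = |N(\varepsilon_d)|^{-1}|N(\xi)| = \mu$. This is an irrational integer of norm $\pm\mu$, associated to $\xi$, greater than $1$, and strictly smaller than $\xi$, contradicting the defining minimality of $\xi\in F_{(d,\mu)}$. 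Hence $n<l$, which is the assertion.

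\emph{Main obstacle.} The only genuinely substantive point is the periodicity identity $\xi_{m+l}=\varepsilon_d\xi_m$, together with correctly pinning down its sign and the positivity $\xi_m>1$; everything else is either already done in the paragraph preceding the statement or is routine bookkeeping with convergents. (An alternative for that step is to invoke Lemma~\ref{lem_XiAlpha}: $\alpha((\xi_n)) = \alpha_{n+1} = \alpha_{n+1-l} = \alpha((\xi_{n-l}))$, and since an integral ideal is determined by its associated quadratic irrational together with its norm, $(\xi_n)=(\xi_{n-l})$, so $\xi_n/\xi_{n-l}$ is a totally positive power of $\varepsilon_d$; but this route additionally requires $\nu_{n-l}=\nu_n$, which the first argument supplies for free.)
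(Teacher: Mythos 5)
Your proof is correct and follows the same route as the paper, whose entire proof is the one-line appeal to Proposition~\ref{prop_Hardy} combined with the computation at the start of Section~\ref{sec_Quadratic integers of small norms}. The only difference is that you additionally spell out the periodicity identity $\xi_{m+l}=\varepsilon_d\,\xi_m$ (with the sign check) to justify the restriction $n<l$, a point the paper treats as implicit.
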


\begin{proof}
Clear from proposition~\ref{prop_Hardy}.
\end{proof}

Assume $p<\omega_d-1$ is a rational prime that splits or ramifies into principal prime ideals in $K_d/\mathbb{Q}$. Write $p O_d=P \overline{P}$. Let $\xi \in P$ and $\tilde{\xi} \in \overline{P}$ be the least elements of $P$, $\overline{P}$ among those greater than 1. Then $1 < \xi, \tilde{\xi} < \varepsilon_d$ and $\xi \tilde{\xi}$ is an algebraic integer associated to $p$. By proposition~\ref{prop_minimal integers} one can write $\xi = a + b \omega_d > \omega_d$ where $(a+b)/b$ or $a/b$ is a convergent to $\omega_d$, whence $|\overline{\xi}| = \frac{p}{\xi} < 1$. Thus $\tilde{\xi} \neq \overline{\xi}$ and $\xi \tilde{\xi}$ is not a rational integer. This shows that $\xi \tilde{\xi} = p\varepsilon_d$.

The distribution of minimal quadratic integers is given in the following
\begin{thm}\label{thm_E_mu_x}
Let $\nu < M < x$. Then
\begin{enumerate}
\item[(i)] $E_{\nu}(x) < 2 x - 2 \sqrt{\nu}+ O(1)$
\item[(ii)] $E_{\nu}(x) > 2\left(1-\frac{1}{2M-1}\right)x - \left( \sum_{\omega_d < M} \frac{|F_{(d,\nu)}|}{\log{\varepsilon_d}}\right) \log x + O(1)$
\end{enumerate}
\end{thm}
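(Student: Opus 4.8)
\emph{Overall strategy.} The plan is to reduce $E_\mu(x)$ to a count of integer points, using two facts. First, for a quadratic integer $\xi>1$ of norm $\pm\mu$ in $O_d$ the associates of $\xi$ that exceed $1$ are exactly the $\xi\varepsilon_d^k$ with $k$ large enough, and $\{\xi\varepsilon_d^k:k\in\mathbb Z\}$ meets $(1,\varepsilon_d)$ in exactly one point; hence $\xi\in F_{(d,\mu)}$ if and only if $1<\xi<\varepsilon_d$, so $E_\mu(x)$ counts precisely the quadratic integers $\xi$ (in any real quadratic field with squarefree radicand) satisfying $1<\xi<\min(x,\varepsilon_d)$ and $|N(\xi)|=\mu$. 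Second, such a $\xi$ is a root of $X^2-TX+n$ with $T=\xi+\overline\xi\in\mathbb Z$ and $n=N(\xi)\in\{\mu,-\mu\}$, and by Proposition~\ref{prop_minimal integers}, whenever $\omega_d>\mu+1$ every $\xi\in F_{(d,\mu)}$ has the form $a+b\omega_d$ with $b\ge1$ and positive numerator, so $\xi>\omega_d>\mu\ge|\overline\xi|$ and $\xi$ is the \emph{larger} root of its minimal polynomial, hence is recovered from $(T,n)$. Writing $G_\mu(y)$ for the number of quadratic integers $\xi$ with $1<\xi<y$ and $|N(\xi)|=\mu$, one gets $G_\mu(y)=2y+O_\mu(1)$ by parametrizing by $(T,n)$: for each sign the admissible traces form an interval of length $y+O(\mu)$, each contributing essentially one root in $(1,y)$, after discarding the $O_\mu(1)$ values of $T$ for which $T^2-4n$ is a perfect square.

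\emph{Upper bound (i).} Only $O_\mu(1)$ fields have $\omega_d\le\mu+1$, and each contributes $O_\mu(1)$ elements to $\bigcup_d F_{(d,\mu)}$; discard them. On the rest the map $\xi\mapsto(\operatorname{Tr}\xi,\,N(\xi))$ is injective (the other root $\overline\xi$ lies in $(-1,1)$, hence is never in any $F_{(d,\mu)}$, and two roots of one $X^2-TX+n$ generate the same field). If moreover $\xi<x$, then since $\xi>\mu$ the trace $T=\xi\mp\mu/\xi$ lies within distance $1$ of $\xi$, so $T\in(\mu-1,\,x)$ when $n=-\mu$ and $T\in(\mu+1,\,x+1)$ when $n=\mu$; counting integer traces in these ranges yields $E_\mu(x)\le 2x-2\sqrt\mu+O(1)$ (in fact $\le 2x-2\mu+O_\mu(1)$).

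\emph{Lower bound (ii).} By the minimal/non-minimal dichotomy, $E_\mu(x)=G_\mu(x)-S(x)$ where $S(x)$ counts the non-minimal $\xi$ with $\varepsilon_d\le\xi<x$ and $|N(\xi)|=\mu$; split $S(x)=S_{<M}(x)+S_{\ge M}(x)$ by $\omega_d<M$ or $\omega_d\ge M$. For $\omega_d<M$ there are finitely many $d$, and in each the non-minimal members of a class $(\rho)$ below $x$ are $\rho\varepsilon_d,\rho\varepsilon_d^2,\dots$, numbering at most $\log(x/\rho)/\log\varepsilon_d<(\log x)/\log\varepsilon_d$; summing over the $|F_{(d,\mu)}|$ classes gives $S_{<M}(x)\le\bigl(\sum_{\omega_d<M}|F_{(d,\mu)}|/\log\varepsilon_d\bigr)\log x$, the subtracted term in the statement. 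For $\omega_d\ge M$ the key observation is $\varepsilon_d>2M-1$: by Lemma~\ref{lem_ProdAlpha} and the normalization $a_l=2\lfloor\omega_d\rfloor$ or $2\lfloor\omega_d\rfloor-1$ recalled in Section~\ref{sec_Preliminaries}, $\varepsilon_d=\prod_{i=1}^l\alpha_i\ge\alpha_l=a_l+1/\alpha_1>a_l\ge 2\lfloor\omega_d\rfloor-1\ge 2M-1$ (taking $M$ integral). Hence if $\xi$ is non-minimal in $O_d$ with $\xi<x$, then $\eta=\xi/\varepsilon_d$ is a quadratic integer of $O_d$ of norm $\pm\mu$ with $1<\eta<x/\varepsilon_d<x/(2M-1)$, and $\xi\mapsto\eta$ is injective (its value determines the field, hence $\varepsilon_d$, hence $\xi$), so $S_{\ge M}(x)\le G_\mu\bigl(x/(2M-1)\bigr)=2x/(2M-1)+O_\mu(1)$. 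Combining with $G_\mu(x)=2x+O_\mu(1)$ gives $E_\mu(x)\ge 2\bigl(1-\tfrac1{2M-1}\bigr)x-\bigl(\sum_{\omega_d<M}|F_{(d,\mu)}|/\log\varepsilon_d\bigr)\log x+O(1)$.

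\emph{Expected main obstacle.} The structural steps above — the characterization of $F_{(d,\mu)}$ via $1<\xi<\varepsilon_d$, the injection $\xi\mapsto(\operatorname{Tr}\xi,N(\xi))$, the bound $\varepsilon_d>2M-1$ for $\omega_d\ge M$, and the division‑by‑$\varepsilon_d$ injection — are clean. What will require the most care is the exact bookkeeping in the trace parametrization of $G_\mu$ and in the $T$‑ranges of part (i): one must decide which of the two roots of $X^2-TX+n$ actually lands in the interval (this depends on the sign $n$ and makes the two signs slightly asymmetric), dispose of the $O_\mu(1)$ pairs $(T,n)$ with $T^2-4n$ square, and track the endpoints precisely enough that the main terms emerge with the stated constants and an error term depending only on $\mu$.
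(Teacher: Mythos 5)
Your proof is correct and follows essentially the same route as the paper: the upper bound comes from parametrizing all norm-$\pm\mu$ quadratic integers by trace and sign of the norm, and the lower bound subtracts the non-minimal elements $\xi\varepsilon_d^k$ ($k\ge 1$), split at $\omega_d = M$, with the finitely many small fields contributing the $\bigl(\sum_{\omega_d<M}|F_{(d,\mu)}|/\log\varepsilon_d\bigr)\log x$ term. Your only deviation is in the $\omega_d\ge M$ case, where you divide by $\varepsilon_d$ (using $\varepsilon_d>2M-1$) and compare with $G_\mu(x/(2M-1))$, whereas the paper uses $\xi\ge 2M-1$ for the minimal element and compares with the count of quadratic units in $(1,x/(2M-1))$; both yield the term $2x/(2M-1)$, and your injection $\xi\mapsto\xi/\varepsilon_d$ is arguably the cleaner of the two, since the paper's projection onto units is not obviously injective when $|F_{(d,\mu)}|>1$.
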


\begin{proof}
Observe that every quadratic integer $y$ of norm $\pm \nu$ is a solution of the equation $X^2 + mX \pm \nu = 0$ for some $m \in \mathbb{Z}$. The number of real quadratic integers $y$ greater than 1 with trace $m$ and norm $\pm \nu$ is 2 if $m > 2 \sqrt{\nu}$ and 1 if $m \leq 2 \sqrt{\nu}$. With the expression $\xi = \frac{m + \sqrt{m^2 \pm 4 \nu}}{2}$ the first inequality is trivial. As for the second inequality, note that such $y$ must be of the form $\xi \varepsilon_d^k$ for some $\xi \in F_{(d,\nu)}$ and $k \geq 0$. $E_{\nu}(x)$ counts the numbers with $k=0$, so we can simply exclude the numbers $\xi \varepsilon_d^k$ less than $x$ with $k \geq 1$. But $\xi \varepsilon_d^k < x$ if and only if $k < \frac{\log x - \log \xi}{\log {\varepsilon_d}}$, and therefore

\begin{equation*}
\sharp \left\{ \xi \varepsilon_d ^k < x \; \mid \; k \geq 1,\; \xi \in F_{(d,\nu)},\; \omega_d \leq M \right\} < \left( \sum_{\omega_d \leq M} \frac{|F_{(d,\nu)}|}{\log{\varepsilon_d}}  \right) \log x.
\end{equation*}

Now consider $\omega_d > M$ and write $\xi \varepsilon_d^k < x$ $\Longleftrightarrow$ $\xi < x$ and $\varepsilon_d^k < \frac{x}{\xi}$. Since $\omega_d > M > \nu$, as mentioned at the beginning of this section $\xi = n \acute{\xi}$ for some $n \in \mathbb{N}$ where $\acute{\xi}$ comes from a convergent to $\omega_d$ and hence $\xi \geq 2M -1$. Therefore the contribution to $E_{\nu}(x)$ from $\omega_d > M$ and $k \geq 1$ is less than the number of quadratic units in the interval $(1, \frac{x}{2M-1})$, which is $\frac{2}{2M-1} x + O(1)$.
\end{proof}

Let $a_{(d,\nu)}$ be the set of reduced integral ideals of norm $\nu$ in $O_d$. By lemma~\ref{lem_ConditionReduced}, if $\nu$ is square-free and $\nu < \sqrt{D}/2$ then $|F_{(d,\nu)}| \leq |a_{(d,\nu)}|$.

\begin{prop}\label{prop_NumberofIdeals}
Assume $\omega_d > \nu$ where $\nu$ is square-free. Let $\nu_1 = gcd(\nu, 2d)$ and write $\nu = \nu_1 \nu_2$. Then
\begin{equation*} |a_{(d,\nu)}| = \begin{cases}
2^{\omega(\nu_2)} & \text{if $d$ is a square modulo $\nu$, $d \equiv$ 2 or 3 (mod 4)}\\
2^{\omega(\nu_2)} & \text{if $d$ is a square modulo $\nu$, $d \equiv$ 1 (mod 4), $\nu$ is odd}\\
2^{\omega(\nu_2)+1} & \text{if $d$ is a square modulo $\nu$, $d \equiv$ 1 (mod 8), $\nu$ is even  }\\
0 & \text{otherwise}
 \end{cases}
\end{equation*}
\end{prop}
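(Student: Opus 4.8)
The plan is in two movements. First, I would show that under the hypothesis \emph{every} integral ideal of norm $\mu$ is reduced, so that $|a_{(d,\mu)}|$ is simply the total number of integral ideals of norm $\mu$; second, I would compute that number from multiplicativity of ideal norms and the quadratic splitting law, keeping careful track of the prime $2$.

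\emph{Step 1 (reduction to an ideal count).} Let $I=[a,b+c\omega]$ be the canonical basis of an integral ideal with $N(I)=\mu$. Since $c\mid a$ and $c\mid b$, one has $I=cI'$ for an integral ideal $I'$, hence $c^2\mid N(I)=\mu$; as $\mu$ is square-free this forces $c=1$, so $a=\mu$. Then $\alpha(I)-\overline{\alpha(I)}=(\omega-\overline{\omega})/a=\sqrt{D}/\mu$, while condition (iii) of the canonical basis gives $-1<\overline{\alpha(I)}<0$; therefore $\alpha(I)>-1+\sqrt{D}/\mu>1$ as soon as $\mu<\sqrt{D}/2$. This last bound follows from $\omega_d>\mu$ (immediately when $d\equiv 2,3\pmod 4$, and for $d\equiv 1\pmod 4$ from the same kind of estimate used at the end of Section~\ref{sec_Reduced ideals}). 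Thus $\alpha(I)$ is reduced and so is $I$; conversely a reduced ideal of norm $\mu$ is in particular an integral ideal of norm $\mu$. Hence $|a_{(d,\mu)}|$ equals the number of integral ideals of $O_d$ of norm $\mu$.

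\emph{Step 2 (the ideal count).} Because $\mu$ is square-free, an integral ideal of norm $\mu$ is, uniquely, a product over the primes $p\mid\mu$ of a prime ideal of norm $p$; such a factor exists iff $p$ is not inert in $K_d$, and there are $2$ choices for it if $p$ splits and $1$ if $p$ ramifies. Hence the count is $0$ if some $p\mid\mu$ is inert and is $2^{s}$ otherwise, where $s=\#\{p\mid\mu:\ p\ \text{splits}\}$. Now apply the splitting law: an odd $p\nmid d$ splits iff $\left(\tfrac{d}{p}\right)=1$ and is inert iff $\left(\tfrac{d}{p}\right)=-1$; an odd $p\mid d$ ramifies; and $2$ splits, is inert, or ramifies according as $d\equiv 1\pmod 8$, $d\equiv 5\pmod 8$, or $d\equiv 2,3\pmod 4$. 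By construction $\mu_2=\mu/\gcd(\mu,2d)$ is exactly the product of the primes $p\mid\mu$ that are odd and do not divide $d$, while the primes of $\mu_1$ are the odd primes dividing $\gcd(\mu,d)$ together with $2$ whenever $2\mid\mu$. By the Chinese Remainder Theorem, ``$d$ is a square mod $\mu$'' is equivalent to ``$\left(\tfrac{d}{p}\right)=1$ for every odd $p\mid\mu$ with $p\nmid d$'', the condition being automatic at $p=2$ and at the primes of $\gcd(\mu,d)$. If this fails, some such $p$ is inert and the count is $0$, which is the ``otherwise'' line. If it holds, every prime of $\mu_2$ splits (contributing $2^{\omega(\mu_2)}$), every odd prime of $\mu_1$ ramifies (factor $1$), and when $2\mid\mu$ the prime $2$ contributes a factor $1$ if $d\equiv 2,3\pmod 4$, a factor $2$ if $d\equiv 1\pmod 8$, and a factor $0$ (forcing the total to $0$) if $d\equiv 5\pmod 8$. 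Assembling these gives $2^{\omega(\mu_2)}$ in the first two displayed cases, $2^{\omega(\mu_2)+1}$ in the third, and $0$ in every remaining situation, which is the assertion.

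\emph{Where the difficulty lies.} The conceptual content is slight; the real work is the bookkeeping of Step 2's final paragraph — above all the behaviour of the prime $2$ and its interplay with $d$ modulo $4$ and modulo $8$ — together with checking that the single ``otherwise'' clause genuinely absorbs both ways the count can vanish, namely $d$ being a non-square modulo $\mu$ and $2\mid\mu$ with $d\equiv 5\pmod 8$. A minor loose end is to make the borderline range $\sqrt{D}/2\le\mu<\omega_d$ of Step 1 rigorous when $d\equiv 1\pmod 4$, where one argues directly with the canonical basis instead of through the clean inequality.
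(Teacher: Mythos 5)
Your argument is correct, and it reaches the count by a genuinely different route from the paper. The paper never passes through the splitting law: it works entirely with the canonical basis $[\mu,\,b+\omega]$ of a reduced ideal, translates the conditions into the congruence $N(b+\omega)\equiv 0\pmod{\mu}$, and counts the admissible $b$ in a complete residue system modulo $\mu$ prime by prime (two roots of $\zeta^2\equiv d$ at odd $p\nmid d$, one at odd $p\mid d$, and a separate hands-on analysis of $b^2+db+d(d-1)/4\pmod 2$ when $d\equiv 1\pmod 4$ and $\mu$ is even). In effect the paper reproves the relevant fragment of the decomposition law from scratch, which keeps the argument self-contained and elementary; your version is shorter and more conceptual but imports Dedekind's splitting criterion as a black box. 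Your Step 1 is also slightly sharper than what the paper writes down: the paper lists the requirement $\alpha(I)>1$ but never verifies that every $b$ it counts satisfies it, whereas you derive $\alpha(I)>-1+\sqrt{D}/\mu$ explicitly. The one point you should actually close is the borderline you flag yourself: for $d\equiv 1\pmod 4$ the hypothesis $\omega_d>\mu$ only gives $\mu<(1+\sqrt d)/2$, which permits a single integer $\mu$ with $\sqrt{D}/2\le\mu<\omega_d$; there one must check directly (from $-\mu<b+\overline{\omega}<0$ and the norm congruence) that no admissible $b$ lands in the interval of length $2\mu-\sqrt D<1$ where $\alpha(I)\le 1$. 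This is a finite, local verification, and since the paper's own proof silently skips the same check, supplying it would make your version strictly more complete rather than less.
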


\begin{proof}
Let $I \in a_{(d,\nu)}$ and write $\alpha(I) = \frac{b + c \omega}{\nu}$. Then $I$ is reduced if and only if $c = 1$ and $\alpha(I)>1$, $-1 < \overline{\alpha(I)} < 0$.

Suppose $d \equiv$ 2 or 3 (mod 4) so that $\alpha(I) = \frac{b + 2d + \sqrt{d}}{\nu}$. The condition $N(b + 2d + \sqrt{d}) \equiv 0$ (mod $\nu$) implies $(b + 2d)^2 \equiv d$ (mod $\nu$), so we can write $b \equiv -2 d + \zeta$ (mod $\nu$) where $\zeta^2 \equiv d$ (mod $\nu$). Hence if $d$ is not a square modulo $\nu$ there is no such ideal. The condition $-\nu < b + c \overline{\omega} < 0$ implies $b$ varies in a complete system of residues modulo $\nu$; hence if $d$ is a square modulo $\nu$, the number of possible $b$'s is the number of solutions to $\zeta^2 \equiv d$ (mod $\nu$). For an odd prime factor $p$ of $\nu$, the congruence $\zeta^2 \equiv d$ (mod $p$) has two roots if $p \nmid d$ and one root if $p \mid d$. The first case easily follows from this.

Now suppose $d \equiv$ 1 (mod 4). Then  $\alpha(I) = \frac{b + d/2 + \sqrt{d}/2}{\nu}$ and in the same way as above we get $b^2 + d b + d(d-1)/4 \equiv$ 0 (mod $\nu$) where $b$ varies in a complete system of residues modulo $\nu$. When $\nu$ is odd, 2 is a unit modulo $\nu$ so one can write $b \equiv - \frac{d}{2} + \zeta$ (mod $\nu$) where $\zeta^2 \equiv \frac{d}{4}$ (mod $\nu$). This proves the second case. When $\nu$ is even, write $\nu = 2 \nu'$ and consider $b^2 + d b + d(d-1)/4 \equiv$ 0 (mod 2) and $b^2 + d b + d(d-1)/4 \equiv$ 0 (mod $\nu'$) separately. The latter has $2^{\omega(\nu_2)}$ solutions. The former has no solution when $d \equiv 5$ (mod 8) and two solutions when $d \equiv 1$ (mod 8), which proves the third case.
\end{proof}

%-----------------------------------------------------------------------------
\section{Proof of Theorem~\ref{thm_main}}\label{sec_proof of main theorem}

Now Theorem~\ref{thm_main} can be proved easily in the philosophy of Theorem 3.1 in \cite{Yamamoto}.

\begin{proof}[proof of theorem~\ref{thm_main}]
Let $\xi \in F_{(d,|\mu|)}$ and put $L = \lfloor \log_{|\mu|} (\sqrt{D}/2) \rfloor$. The ideals $(\xi^e)$ and $(\overline{\xi}^e)$ of $O_d$ are reduced if $|\mu|^e < \sqrt{D}/2$ by lemma~\ref{lem_ConditionReduced}. Now by lemma~\ref{lem_XiAlpha}, ~\ref{lem_ProdAlpha} and ~\ref{lem_quotient_norm} we have

\begin{align*}
\varepsilon_d &= \prod_{i=1}^{l} \alpha_i\\
&\geq \prod_{e=1}^{L} \alpha((\xi^e)) \prod_{e=1}^{L} \alpha((\overline{\xi}^e))\\
&> \prod_{e=1}^{L} \left( \frac{\sqrt{D}}{|\mu|^e} - 1 \right)^2.
\end{align*}

Taking logarithm,

\begin{align*}
\log \varepsilon_d &> \sum_{e=1}^{L} 2 \log\left(\frac{\sqrt{D}}{|\mu|^e} - 1\right)\\
&> 2 \sum_{e=1}^{L} \left(  \log \sqrt{D} -  e \log |\mu| - \left( \frac{\sqrt{D}}{|\mu|^e}-1\right)^{-1} \right).
\end{align*}

The last term can be written
\begin{align*}
\sum_{e=1}^L \left( \frac{\sqrt{D}}{|\mu|^e}-1\right)^{-1} &= \sum_{e=1}^L \frac{|\mu|^e}{\sqrt{D}} \left(  \frac{1}{1 - \frac{|\mu|^e}{\sqrt{D}}}  \right)\\
&< \sum_{e=1}^L \frac{2|\mu|^e}{\sqrt{D}}\\
&= \frac{2|\mu|^e}{\sqrt{D}} \left( \frac{|\mu|^L - 1}{|\mu|-1}\right)\\
&< \frac{|\mu|}{|\mu|-1},
\end{align*}

and therefore
\begin{align*}
\log \varepsilon_d &> 2 L \log \sqrt{D} - L (L + 1) \log |\mu| - \frac{2|\mu|}{|\mu|-1}\\
&> 2 \left(  \frac{\log(\sqrt{D}/2)}{\log |\mu|} - 1  \right) \log \sqrt{D} - \log (\sqrt{D}/2) \left(  \frac{\log(\sqrt{D}/2)}{\log |\mu|} + 1  \right) - \frac{2|\mu|}{|\mu|-1}\\
&= \frac{1}{\log |\mu|} \left( \log{ \frac{\sqrt{D}}{2}  } \right)^2 - \left( 3 - \frac{2 \log 2}{\log |\mu|} \right) \log{\frac{\sqrt{D}}{2}} - -2\log 2 -\frac{2|\mu|}{|\mu|-1}.
\end{align*}

\eqref{eqn_thm_main_1} follows from theorem~\ref{thm_E_mu_x} and proposition~\ref{prop_NumberofIdeals} immediately. \eqref{eqn_thm_main_2} also follows from Theorem~\ref{thm_E_mu_x} at once too.
\end{proof}

% -----------------------------------------------------------
\section{The quadratic progressions}\label{sec_the quadratic progressions}

In this section we give the proofs of proposition~\ref{prop_d_mu} and ~\ref{prop_d_squarefree}. The constructive proofs also give quadratic progressions, which resemble the progressions that appeared in the inverse problem for Pell equation \cite{JHPark2}.

\medskip
\begin{proof}[Proof of proposition~\ref{prop_d_mu}]
We prove (1) and (2) at the same time. Suppose $d\in\frak{D}$. Then there exists a quadratic integer $\xi \in O_d$ of norm $\mu$. Assume $\xi \in \mathbb{Z}[\sqrt{d}]$ first and write $\xi = ny+x+y\sqrt{d}$. Multiplying a unit if necessary, we may assume $0\leq x < y$ and $1 < \xi < \varepsilon_d$. Then
\begin{equation*}
    N(\xi) = n^2 y^2 + 2nxy + x^2 - y^2 d = \mu
\end{equation*}
and
\begin{equation*}
    x^2 \equiv \mu \text{ (mod $y$)} \;\;\;\text{and} \;\;\; 2xyn \equiv - x^2 + \mu \text{ (mod $y^2$)}.
\end{equation*}
Since $\mu$ is square-free, $(x,y)=1$ and so $2xyn \equiv - x^2 + \mu$ (mod $y^2$) if and only if $2 n \equiv  \frac{-x^3 + \mu x}{y} \mu^{-1}$ (mod $y$). Thus $(y,x)\in I^{(0)}(\mu)$, and $n$ is uniquely determined modulo $\tilde{y}$. We write $n = n_0 + \tilde{y} k$ for this.

Conversely, if $x^2 \equiv \mu$ (mod $y$) and $2xyn \equiv - x^2 + \mu$ (mod $y^2$), put
\begin{equation}\label{eqn_d}
    d = n^2 + \frac{2x}{y}n + \frac{x^2 - \mu}{y^2}
\end{equation}
and it immediately follows that
\begin{equation*}
    N(ny + x + y \sqrt{d}) = \mu.
\end{equation*}
We have proved that for each $d \in \frak{D}$, in case $\xi \in \mathbb{Z}[\sqrt{d}]$, there exists a pair $(y,x) \in I^{(0)}(\mu)$, and for each $(y,x)\in I^{(0)}(\mu)$ there arises an arithmetic progression $\{ n_0 + \tilde{y} k\}_k$ with common difference $\tilde{y}$, which gives rise to a quadratic progression $Q^{(0)} = \{ d(k) = (n_0+\tilde{y} k)^2 + \frac{2x}{y}(n_0+\tilde{y} k) + \frac{x^2-\mu}{y^2} \}_k$. It is easy to see that $Q^{(0)}$ is of the same form with $\frak{D}^{(0)}(\mu;y;t)$ for some $t$.

Now assume $\xi = n y + x + y \omega_d \in \mathbb{Z}[\omega_d] \setminus \mathbb{Z}[\sqrt{d}]$. Then

\begin{equation*}
    N(ny+x+y\omega_d) = \left(ny+x+\frac{y}{2}\right)^2 - \frac{y^2}{4}d = \mu
\end{equation*}
or
\begin{equation*}
    ((2n+1)y+2x)^2 - y^2d = 4\mu.
\end{equation*}
Since $y$ is odd, we have
\begin{equation*}
    x^2 \equiv \mu \text{ (mod $y$)} \;\;\;\text{and} \;\;\; (2n+1)xy \equiv - x^2 + \mu \text{ (mod $y^2$)}
\end{equation*}
and $n$ is uniquely determined modulo $y = \tilde{y}$.

Conversely, if $x^2 \equiv \mu$ (mod $y$) and $(2n+1)xy \equiv -x^2+\mu$ (mod $y^2$), put
\begin{equation}\label{eqn_d 1 mod 4 EQN}
    d = (2n+1)^2 + \frac{4x}{y}(2n+1) + \frac{4x^2 - 4\mu}{y^2}
\end{equation}
so that $N(ny + x + y \omega_d) = \mu$. This gives another quadratic progression $Q^{(1)}$ which is of the same form as $\frak{D}^{(1)}(\mu;y;t)$ for some $t$.

There is a canonical way of choosing $\phi_{\mu}^{(j)}$. Let $n$ and $d$ be in the relation as in (\ref{eqn_d}). For each $(y,x)\in I^{(j)}(\mu)$, $Q^{(j)}$ have only finitely many $d$ for which $n y + x + y \omega_d > \varepsilon_d$. This is because $|\mu| < \frac{\sqrt{D}}{2}$ implies $n+\frac{x}{y}$ (or $n + 1 +\frac{x}{y}$) is a convergent to $\omega_d$; in this case, lemma~\ref{lem_quotient_norm} shows that $n y + x + y \omega_d > \varepsilon_d$ can happen only when $\sqrt{D} = a + O(1)$ where $a$ is the largest partial quotient in $\frac{x}{y}=[0,a_1,a_2,\cdots,a_m]$. Discarding these finite numbers, we choose $\phi_{\mu}^{(j)}(y,x)$ to be the smallest $d$ in $Q^{(j)}$ satisfying $n y + x + y \omega_d < \varepsilon_d$.

In previous paragraphs, for each $d \in \frak{D}^{(j)}(\mu;y;\phi_{\mu}^{(j)}(y,x))$ there is a quadratic integer $ny + x + y \omega_d$ whose trace is $T^{(0)}_{y,x}(d) = 2ny + 2x$ or $T^{(1)}_{y,x}(d) = (2n+1)y + 2x$. It is clear that $d_{\mu}(T^{(j)}_{y,x}(d)) = d$ for $d \in \frak{D} \cap \frak{D}^{(j)}(\mu;y;\phi_{\mu}^{(j)}(y,x))$ and the traces $T^{(j)}_{y,x}(d)$ form an arithmetic progression from the expression $n = n_0 + \tilde{y}k$. To compute the sum \eqref{eqn_d_T_zeta sum}, observe that $n \asymp \sqrt{d}$ and write
\begin{align*}
    &\sum_{j=0}^1 \sum_{(y,x)\in I^{(j)}(\mu)} \sum_{d \in \frak{D}^{(j)}(\mu;y;\phi_{\mu}^{(j)}(y,x))} \frac{1}{T^{(j)}_{y,x}(d)^s}\\
    &\ll \sum_{j=0}^1 \sum_{(y,x)\in I^{(j)}(\mu)} \sum_{ k \geq 0} \frac{1}{\left(x + y \sqrt{\phi^{(j)}_{\mu}(y,x)} + y^2k\right)^s}\\
    &\ll \zeta(s) \left( \sum_{j=0}^1 \sum_{(y,x)\in I^{(j)}(\mu)} \frac{1}{y^{2s}} \right)
    + \sum_{j=0}^1 \sum_{(y,x)\in I^{(j)}(\mu)}  \frac{1}{\left(y \sqrt{\phi^{(j)}_{\mu}(y,x)}\right)^s}
    \qquad  \text{as $s \rightarrow 1+$.}
\end{align*}
For $(y,x) \in I^{(j)}(\mu)$, every odd prime factor $q$ of $y$ satisfies $\left( \frac{\mu}{q} \right) = 1$. Unless $\mu = 1$, the sum $\sum \frac{1}{y^s}$ over all such $y$'s involves only a half of the primes in the Euler product form of the zeta function. Hence its order is asymptotically $\asymp \zeta(s)^{1/2}$ as $s \rightarrow 1+$. Using the Chinese remainder theorem it is easy to see that the number of $x \in [0,y)$ satisfying $x^2 \equiv \mu$ (mod $y$) is a bounded multiple (that is, between a half and twice) of $2^{\omega(y)}$. Recalling $\sum_{n=1}^{\infty} \frac{2^{\omega(n)}}{n^s}= \frac{\zeta(s)^2}{\zeta(2s)}$ (see theorem 301 of \cite{Hardy}), one sees that
\begin{equation*}
    \sum_{(y,x) \in I^{0}(\mu)} \frac{1}{y^s} \asymp \sum_{y\; :\;\text{ $\mu$ is a square mod $y$}} \frac{2^{\omega(y)}}{y^s} \asymp \zeta(s) \;\text{ as $s \rightarrow 1+$}
\end{equation*}
and $\sum_{j=0}^1 \sum_{(y,x)\in I^{(j)}(\mu)} y^{-2s} \ll 1$. Besides, since there is an upper bound of the period of $\omega_d$ in terms of $d$ \cite{Cohn,Ishii} which gives an upper bound of $\varepsilon_d$, we also have a lower bound of $\phi_{\mu}^{(j)}(y,x)$ (in terms of $y,x$) which goes to the infinity as $y$ grows. Therefore
\begin{equation*}
    \sum_{(y,x) \in I^{(j)}(\mu)} \frac{1}{\left(y \sqrt{\phi_{\mu}^{(j)}(y,x)}\right)^s} = \;_o(\zeta(s)) \quad \text{ as $s \rightarrow 1+$},
\end{equation*}
which completes the proof.

\end{proof}

\medskip
Now we give the number of elements in $\frak{D}(\mu;y,x)$ that are square-free. We state a lemma first.

\begin{lem}\label{lem_Quadratic Hensel lemma}
Let $p$ be an odd prime and $f(x) \in \mathbb{Z}[x]$ a quadratic polynomial
whose leading coefficient is not divisible by $p$. Then $f(x) \equiv 0$ (mod $p^m$) has a solution if and only if the discriminant of $f(x)$ is a square modulo $p^m$.
\end{lem}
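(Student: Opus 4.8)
The plan is to complete the square and then reduce the solvability question to one about squares modulo $p^m$, using that $2$ and the leading coefficient are units mod $p$. Write $f(x) = ax^2 + bx + c$ with $p \nmid a$. Since $p$ is odd, $4a$ is a unit modulo $p^m$, so $f(x) \equiv 0 \pmod{p^m}$ is equivalent to $4a f(x) \equiv 0 \pmod{p^m}$, i.e.\ to $(2ax+b)^2 \equiv b^2 - 4ac \pmod{p^m}$. Setting $u = 2ax + b$ and writing $\Delta = b^2 - 4ac$ for the discriminant, the congruence becomes $u^2 \equiv \Delta \pmod{p^m}$. Because $2a$ is a unit modulo $p^m$, the substitution $x \mapsto u = 2ax+b$ is a bijection on $\mathbb{Z}/p^m\mathbb{Z}$; hence $f(x)\equiv 0 \pmod{p^m}$ has a solution in $x$ if and only if $u^2 \equiv \Delta \pmod{p^m}$ has a solution in $u$, which is exactly the statement that $\Delta$ is a square modulo $p^m$.

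The only point that needs a word of care is the invertibility step: multiplying a congruence by a unit does not change its solution set, and here $4a$ and $2a$ are units precisely because $p$ is odd and $p \nmid a$ — this is where both hypotheses are used. There is no genuine obstacle; the lemma is essentially the observation that completing the square is available mod $p^m$ as soon as $2a$ is invertible there, so no lifting/Hensel argument is actually required despite the name. (If one wished, the classical Hensel-type statement — that $\Delta$ is a square mod $p^m$ for all $m$ iff it is a square mod $p$, when $p \nmid \Delta$ — could be invoked, but it is not needed for the biconditional as stated, which is uniform in $m$.)

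I would present the argument in three short lines: (i) $f(x)\equiv 0 \iff 4af(x)\equiv 0 \iff (2ax+b)^2 \equiv \Delta \pmod{p^m}$; (ii) $x\mapsto 2ax+b$ is a bijection of $\mathbb{Z}/p^m\mathbb{Z}$; (iii) therefore a solution exists iff $\Delta$ is a square mod $p^m$. This keeps the proof to a few lines and makes transparent where oddness of $p$ and $p\nmid a$ enter.
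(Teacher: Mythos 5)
Your proof is correct, and it takes a genuinely different (and simpler) route than the paper's. The paper proves the backward direction the same way you do in spirit (``the root formula for quadratic equations gives a solution''), but its forward direction is a Hensel-type lifting argument: starting from a solution $t$ of $f(x)\equiv 0 \pmod{p^m}$, it perturbs $f$ by a constant $\alpha$ divisible by $p^m$, constructs a genuine root of $f+\alpha$ in $\mathbb{Z}_p$ by successive approximation, and then invokes unique factorization in $\mathbb{Z}_p[x]$ to conclude that the discriminant of $f+\alpha$ (hence of $f$, modulo $p^m$) is a square. Your completing-the-square argument replaces all of this with the observation that $f(x)\equiv 0$ is equivalent to $(2ax+b)^2\equiv \Delta \pmod{p^m}$ after multiplying by the unit $4a$, and that $x\mapsto 2ax+b$ is a bijection of $\mathbb{Z}/p^m\mathbb{Z}$; this handles both directions at once, is uniform in $m$, and makes explicit exactly where the hypotheses $p$ odd and $p\nmid a$ enter. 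What the paper's approach buys is a template that generalizes to higher-degree polynomials where completing the square is unavailable, but for the quadratic case needed here your argument is shorter and cleaner.
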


\begin{proof}
($\Leftarrow$) The root formula for quadratic equations gives a solution.

($\Rightarrow$) Let $t \in \mathbb{Z}$ be a solution to the modular equation. Then
\begin{equation*}
    f(t + p j) = f(t) + f'(t) p j + \frac{f''(t)}{2} p^2 j^2.
\end{equation*}

Let $p^r|| f'(t)$. Choose $\alpha$ so that $f(t) + \alpha \equiv 0$ (mod $p^M$) where $M$ is sufficiently large. Note that $\alpha$ is necessarily divisible by $p^m$. We have
\begin{align*}
    f(t + p^{r+1}j_1) + \alpha &= f(t) + \alpha + \frac{f'(t)}{p^r}p^{2r+1}j_1 + \frac{f''(t)}{2}p^{2r+2}j_1^2 \\
&\equiv f(t) + \alpha + \frac{f'(t)}{p^r}p^{2r+1}j_1 \text{ (mod $p^{2r+2}$)},
\end{align*}
\begin{align*}
    f(t+p^{r+1}j_1+p^{r+2}j_2) + \alpha &= f(t+p^{r+1}j_1) + \alpha + f'(t+p^{r+1}j_1) p^{r+2}j_2\\
&\;\;\;\; + \frac{f''(t+p^{r+1}j_1)}{2}p^{2r+4}j_2^2. \\
\end{align*}
Writing $f'(t+p^{r+1}j_1) = f'(t)+f''(t)p^{r+1}j_1$,
\begin{align*}
    f(t+p^{r+1}j_1+p^{r+2}j_2) + \alpha &\equiv f(t+p^{r+1}j_1) + \alpha + \frac{f'(t)}{p^r}p^{2r+2}j_2 \text{ (mod $p^{2r+3}$)},
\end{align*}
and successively, there exists a unique sequence $(j_1,j_2,j_3,\cdots)$ such that $t+p^{r+1}j_1 + p^{r+2}j_2+p^{r+3}j_3+\cdots \in \mathbb{Z}_p$ is a root of $f(x)+\alpha$. Since $\mathbb{Z}_p [x]$ is a UFD, it follows that the discriminant of $f(x)+\alpha$ is a square in $\mathbb{Z}_p$ and hence that of $f(x)$ is a square modulo $p^m$.
\end{proof}

We appeal to the next theorem. We only need its strength for quadratic polynomials, which can be proved unconditionally using the sieve of Eratosthenes \cite{Granville}.

\begin{thm}[\cite{Granville}]\label{thm_squarefree portion}
Suppose that $f(x) \in \mathbb{Z}[x]$ has no repeated root. Let $B$ be the largest integer which divides $f(n)$ for all integer $n$, and select $B'$ to be the smallest divisor of $B$ for which $B/B'$ is square-free. If the $abc$-conjecture is true, then there are $\sim C_f N$ positive integers $n \leq N$ for which $f(n)/B'$ is square-free, where $C_f>0$ is a positive constant, which we determine as follows;
\begin{equation*}
C_f = \prod_{p:\; \text{prime}} \left(  1 - \frac{\omega_f(p)}{p^{2 + q_p}}  \right)
\end{equation*}
where, for each prime $p$, we let $q_p$ be the largest power of $p$ which divides $B'$ and let $\omega_f(p)$ denote the number of integers $a$ in the range $1 \leq a \leq p^{2+q_p}$ for which $f(a)/B' \equiv 0$ (mod $p^2$).
\end{thm}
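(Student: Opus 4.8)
The plan is to detect squarefreeness by the identity $\mu^2(m)=\sum_{d^2\mid m}\mu(d)$ and extract a main term from small square divisors, deferring the large ones to the abc-conjecture. Write $g(n)=f(n)/B'$, which is an integer for every $n$ by the choice of $B'$. Then
\begin{equation*}
\#\{n\le N:\ g(n)\text{ squarefree}\}=\sum_{n\le N}\sum_{d^2\mid g(n)}\mu(d)=\sum_{d\ge1}\mu(d)\,\#\{n\le N:\ d^2\mid g(n)\}.
\end{equation*}
For each $d$, if $\rho_g(d^2)$ denotes the number of residues $a\bmod d^2$ with $d^2\mid g(a)$, the inner count is $\rho_g(d^2)N/d^2+O(\rho_g(d^2))$. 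By the Chinese remainder theorem $\rho_g$ is multiplicative, so the formal main sum $N\sum_d\mu(d)\rho_g(d^2)/d^2$ factors into the Euler product $\prod_p\bigl(1-\rho_g(p^2)/p^2\bigr)$.

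The next step is to identify the local factor with the stated one and to check positivity. At a prime $p$ with $p^{q_p}\|B'$, whether $p^2\mid g(a)$ is determined by $f(a)\bmod p^{2+q_p}$, hence by $a\bmod p^{2+q_p}$; counting solutions in a single period of length $p^{2+q_p}$ gives $\rho_g(p^2)/p^2=\omega_f(p)/p^{2+q_p}$, so the factor is exactly $1-\omega_f(p)/p^{2+q_p}$ and the product is the asserted $C_f$. For convergence and positivity I would use that $f$ has no repeated root: for all but finitely many $p$ (those not dividing $\mathrm{disc}(f)$) every simple root of $f\bmod p$ lifts uniquely by Hensel's lemma, exactly as in lemma~\ref{lem_Quadratic Hensel lemma} for the quadratic case, so $\omega_f(p)=O(1)$ uniformly and each factor is $1-O(p^{-2})$; moreover $g\not\equiv0\pmod{p^2}$ for every $p$, so no factor vanishes and $C_f>0$.

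The truncation is unconditional up to $d=\sqrt N$. Splitting at $d=\sqrt N$, the accumulated error from $d\le\sqrt N$ is $\ll\sum_{d\le\sqrt N}\rho_g(d^2)\ll\sqrt N(\log N)^{O(1)}=o(N)$ (using $\rho_g(d^2)=d^{o(1)}$), while the tail of the convergent main sum satisfies $N\sum_{d>\sqrt N}\rho_g(d^2)/d^2\to0$. Together these yield the main term $C_fN+o(N)$ with no appeal to abc. What remains is the contribution of $d>\sqrt N$, namely
\begin{equation*}
\#\{n\le N:\ d^2\mid f(n)\text{ for some }d>\sqrt N\},
\end{equation*}
i.e. the $n$ for which $f(n)$ carries a square exceeding $N$; this is where abc is indispensable. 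First I would reduce to irreducible $f$: writing $f=\prod_i f_i$ with the $f_i$ distinct irreducibles, the nonvanishing of $\mathrm{Res}(f_i,f_j)$ bounds $\gcd(f_i(n),f_j(n))$, so a large square of $f(n)$ forces a large square of a single $f_i(n)$. For irreducible $f$ of degree $k$, a divisor $d^2>N$ forces $\mathrm{rad}(f(n))\le f(n)/d<|f(n)|^{1-1/(2k)}$, and the abc-conjecture—applied through the ternary relation $(n-\theta)-(n-\theta')+(\theta-\theta')=0$ among the factors of $f$ in its splitting field—shows that the set of $n$ with such an anomalously small radical has density zero.

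The main obstacle is precisely this last step: turning the abc input into a quantitative, \emph{uniform} density-zero estimate across the entire range $\sqrt N<d\ll N^{k/2}$. The pointwise radical lower bound only limits how large a single square factor can be; it does not by itself bound how \emph{often} a large square occurs, so a genuine counting argument is needed. I would separate $d$ according to whether it has a prime factor $p>N^{\eta}$ (producing $p^2\mid f(n)$ with $p$ large) or is $N^{\eta}$-smooth yet exceeds $\sqrt N$, and apply abc quantitatively in each regime, the delicate point being to keep every piece $o(N)$ uniformly in $d$. Once this large-square contribution is shown to be $o(N)$, it combines with the unconditional main term to give $\sim C_fN$, completing the proof.
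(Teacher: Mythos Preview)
The paper does not prove this theorem at all: it is quoted from Granville and used as a black box, with the remark that only the quadratic case is needed here and that case is unconditional (no abc required). So there is no ``paper's own proof'' to compare against; your write-up is really an attempt to reconstruct Granville's argument.

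Your sieve skeleton is the right one and matches Granville: expand $\mu^2$, extract the Euler product from small $d$, and push the large-$d$ contribution onto abc. The identification $\rho_g(p^2)/p^2=\omega_f(p)/p^{2+q_p}$, the Hensel argument for convergence, and the check that no local factor vanishes are all fine.

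The genuine gap is the abc step, and it is not just a matter of detail. Your proposed mechanism, the relation $(n-\theta)-(n-\theta')+(\theta-\theta')=0$, lives in the splitting field, so it already presupposes abc over number fields rather than over $\mathbb{Q}$. Even granting that, a single such triple only yields $\mathrm{rad}\bigl((n-\theta)(n-\theta')\bigr)\gg n^{1-\epsilon}$, and combining the pairs does not produce the bound $\mathrm{rad}(f(n))\gg n^{\deg f-1-\epsilon}$ that you implicitly use. More seriously, even the strong radical bound does not kill the whole range $d>\sqrt{N}$: if $d^2\mid f(n)$ with $d$ just above $\sqrt{N}$ one only gets $\mathrm{rad}(f(n))\ll N^{k-1/2}$, and comparing with $n^{k-1-\epsilon}$ gives no restriction on $n\le N$ once $k\ge 2$. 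So the pointwise radical inequality by itself cannot show density zero for moderately large squares; a separate counting argument is required in the intermediate range. Granville's route is different from what you sketch: he passes through Belyi's theorem (following Elkies) to manufacture an abc triple over $\mathbb{Q}$ whose radical controls $\mathrm{rad}(f(n))$ with the correct exponent, and he organizes the large-$d$ range by prime squares rather than by arbitrary $d$. Your final paragraph correctly identifies this as the crux, but the concrete plan you give would not close it.
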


%%%%%%%%%%%%%%%%%%%%%%%%%%%%%%%%%%%%%%%%%%%%%%%%%%%%%%%%%%%%%%%%%%%%%%%%%%%%%%%%%%%
\begin{proof}[Proof of proposition~\ref{prop_d_squarefree}]
We first treat $\frak{D}^{(0)}(\mu;y,\phi_{\mu}^{(0)}(y,x))$. Let $d_0 = \phi_{\mu}^{(0)}(y,x)$ be the least element of $\frak{D}^{(0)}(\mu;y,\phi_{\mu}^{(0)}(y,x))$ and $n_0$ the associated integer in the context of the proof of proposition~\ref{prop_d_mu}. The elements of $\frak{D}^{(0)}(\mu;y,\phi_{\mu}^{(0)}(y,x))$ are given by a quadratic polynomial
\begin{align*}\label{eqn_d_mu_polynomial form}
    d = d(k) &=\left( n_0 + k \tilde{y} \right)^2 + 2 \frac{x}{y}\left( n_0 + k \tilde{y} \right) + \frac{x^2 - \mu}{y^2}\\
&= \tilde{y}^2 k^2 + \left( \frac{2\tilde{y}}{y}x + 2 \tilde{y} n_0 \right) k + d_0
\end{align*}
for nonnegative integers $k$.

We use the notations of theorem~\ref{thm_squarefree portion}. Let $\delta$ be the discriminant of the quadratic polynomial $d(k)$ and $\omega'_d(p)$ the number of solutions to $d(k) \equiv 0$ (mod $p^2$) in the range $0 \leq k < p^2$. When $y$ is even, $\delta = (x + y n_0)^2 - y^2 d_0 = \mu$ and similarly $\delta = 4\mu$ when $y$ is odd. Therefore $d(k)$ has no repeated root. Note that $\omega'_d(p) < p^2$ implies $p \nmid B'$ and hence $q_p=0$.

We consider odd primes first. Clearly $d(k)$ (mod $p$) is degenerate if and only if $p | \tilde{y} \mu$.
For $p \nmid \tilde{y} \mu$, every root of $d(k) \equiv 0$ (mod $p$) has a unique lifting to a $p$-adic root. There exists such a root if and only if the discriminant is a square modulo $p$, whence we have
\begin{equation*}
    w'_d(p) =\left\{
               \begin{array}{ll}
                 2 & \hbox{if $\left( \frac{\mu}{p} \right) = 1$;} \\
                 0 & \hbox{if $\left( \frac{\mu}{p} \right) = -1$.}
               \end{array}
             \right.
\end{equation*}

If $p | \tilde{y}$, $d(k)$ is congruent to $(x + y n_0)k + d_0$ or $2(x + y n_0)k + d_0$ modulo $p^2$, which has a unique solution and hence $\omega'_d(p) = 1$.

When $p | \mu$, $d(k) \equiv 0$ (mod $p$) has a double root. Let $t$ be the root of this equation in the range $0 \leq t < p$. From $d(t+pj) \equiv d(t) + d'(t)pj$ (mod $p^2$) and $d'(t) \equiv 0$ (mod $p$), it follows that $d(k) \equiv 0$ (mod $p^2$) has $p$ roots if $d(t) \equiv 0$ (mod $p^2$) or none otherwise. By lemma~\ref{lem_Quadratic Hensel lemma}, $d(k) \equiv 0$ (mod $p^2$) has a root if and only if the discriminant $\mu$ (or $4\mu$) is a square modulo $p^2$, which in this case is equivalent to $p^2 | \mu$. Thus $\omega'_d(p) = p$ if $p^2 | \mu$ and $\omega'_d(p) = 0$ if not.

Now let $p=2$.
Assume $y$ is odd(so $2y \equiv 2$ mod $4$). Then
\begin{align*}
    d(k) &\equiv k^2 + 2(x + y n_0)k + d_0\\
&\equiv k^2 + 2(x+n_0)k + n_0^2 + 2 x n_0 + x^2 - \mu \text{ (mod $4$)}
\end{align*}
and
\begin{align*}
&d(0) \equiv d(2) \equiv n_0^2 + 2 x n_0 + x^2 -\mu \text{ (mod $4$)},\\
&d(1) \equiv d(3) \equiv n_0^2 + 2 x n_0 + x^2 -\mu + 1 + 2 x + 2 n_0 \text{ (mod $4$)}.
\end{align*}
If $n_0$ is odd, $d(0) \equiv d(2) \equiv (x+1)^2 -\mu$ (mod $4$) and $d(1) \equiv d(3) \equiv x^2 - \mu$ (mod $4$). If $n_0$ is even, $d(0) \equiv d(2) \equiv x^2 -\mu$ (mod $4$) and $d(1) \equiv d(3) \equiv (x+1)^2 - \mu$ (mod $4$). It follows that $\omega'_d(2) = 2$ if $\mu \equiv 0,1$ (mod $4$) and $\omega'_d(2) = 0$ otherwise.

Now assume $y$ is even (and $x$ is necessarily odd). Suppose $y = 2 \tilde{y}$ where $\tilde{y}$ is odd. In a single line of computation we obtain
\begin{align*}
&d(1) \equiv d_0 + x + 3 \text{ (mod $4$)},\\
&d(2) \equiv d_0 + 2x \text{ (mod $4$)},\\
&d(3) \equiv d_0 + 3x + 3 \text{ (mod $4$)}
\end{align*}
which shows that $\omega'_d(2) = 2$ when $d_0$ is even and $\omega'_d(2) = 0$ otherwise. Observe that
\begin{equation*}
\mu = (x + y n_0)^2 - y^2 d_0 \equiv 1 + 4 n_0 + 4 n_0^2 - 4 d_0 \equiv 1 - 4 d_0 \text{ (mod $8$)}
\end{equation*}
and it follows that $d_0$ is even if and only if $\mu \equiv 1$ (mod $8$).

Finally, suppose $4 | y$. In this case $d(k) \equiv x k + d_0$ (mod $4$), which has a unique solution to $d(k) \equiv 0$ (mod $4$) and hence $\omega'_d(2) = 1$.

In every case $\omega'_d(p)$ is less than $p^2$ and $q_p = 0$.

To treat $\frak{D}^{(1)}(\mu;y,\phi_{\mu}^{(1)}(y,x))$, write
\begin{align*}
    d =d(k) &= (2(n_0+yk)+1)^2 + \frac{4x}{y}(2(n_0+yk)+1) + \frac{4x^2 - 4\mu}{y^2}\\
    &= 4 \left( n_0^2 + 2 y n_0 k + y^2 k^2 \right) + 1 + 4 n_0 + 4 y k + \frac{4x}{y}(2n_0 + 1 + 2yk)\\
    &\;\;\;\;\; + \frac{4x^2 - 4\mu}{y^2}\\
    &= 4 y^2 k^2 + 4(2 y n_0 + y + 2x)k + d_0.
\end{align*}
The computations for odd primes are exactly the same as the case $j=0$. For $p=2$, $\omega'_d(2)=0$ because $d$ is always congruent to 1 modulo 4. Applying theorem~\ref{thm_squarefree portion} we complete the proof.
\end{proof}

% -----------------------------------------------------------
\section{The Density of Discriminants and further topics }\label{sec_further topics}

It is very natural to ask the density of $d$'s for which $p$ splits or ramifies into principal prime ideals, or $d$'s such that $p$ is in the image of the norm map $N_{\mathbb{Q}(\sqrt{d}) / \mathbb{Q}}$. This is trivially zero because such $d$ must not be divisible by any prime factor $q$ with $\left( \frac{p}{q} \right) = -1$ and these `special' integers constitute only a null set in $\mathbb{Z}$. More meaningful question is therefore to ask the portion of $d$'s out of all those special integers.

When $\mu \neq 1$, the constructions of $I^{(j)}(\mu)$ and $\widehat{\frak{D}}^{(j)}(\mu;y;\phi_{\mu}^{(j)}(y,x))$ are pretty much the same as the case $\mu = -1$. We can therefore expect the density of
\begin{equation*}
\bigcup_{(y,x) \in I^{(j)}(\mu)} \widehat{\frak{D}}^{(j)}(\mu;y;\phi_{\mu}^{(j)}(y,x))
\end{equation*}
for $\mu \neq 1$ to be always similar to that of
\begin{equation*}
\bigcup_{(y,x) \in I^{(j)}(-1)} \widehat{\frak{D}}^{(j)}(-1;y,\phi_{-1}^{(j)}(y,x)).
\end{equation*}

This counts the square-free integers $d$ such that $O_d$ has an element of norm $-1$, i.e. the fundamental unit of $\mathbb{Q}(\sqrt{d})$ has norm $-1$. This is possible only when every odd prime factor of $d$ is congruent to $1$ modulo $4$, but not all of such integer $d$ gives a field with $N(\varepsilon_d) = -1$. Assume $\frak{P}$ is a set of prime numbers with Dirichlet density $\sigma$. Following the estimation in \cite{Rieger1965}, one can deduce that the number of positive integers (or positive square-free integers) less than $N$ and whose odd prime divisors are all in $\frak{P}$ is of the order $\asymp N \left( \log N \right) ^{-1+\sigma}$. In particular, when $\mu$ is not a square and $\frak{P} = \{p\;|\;p:\text{prime, } \left(\frac{\mu}{p}\right)=1\}$, the number of such fundamental discriminants is of the order $\frac{N}{\sqrt{\log N}}$.

For $\mu = -1$, it is a recent result that between $41\%$ and $67\%$ out of such fundamental discriminants satisfies $N(\varepsilon_d) = -1$ \cite{Fouvry_3}. We hope similar results to be found for prime numbers $p$ instead of $-1$ too, but the situation is not that simple. Consider a prime ideal $\frak{p}$ of $\mathbb{Q}(\sqrt{d})$ above $p$ and its ideal class $[\frak{p}]$. Then our problem is to show that $[\frak{p}]$ is the principal class for a positive density of fundamental discriminants out of those $O(N (\log N)^{-1/2})$ numbers. As $-1$ is replaced by $p$, however, the argument in \cite{Fouvry_3} only implies that the order of $[\frak{p}]$ is not divisible by 2. This is because the whole reasoning stems from the theory of genera, which covers the 2-torsion elements (and 2-divisibility) in the ideal class group. The asymptotic behavior of 3-torsion elements is handled via class field theory \cite{Davenport}, and the same technique seems to be applicable in obtaining some 3-divisibility result of $[\frak{p}]$. Except these few results, not so much is known about density estimation. It will be very interesting if a family of ideal classes can be actually shown to be principal.

% -----------------------------------------------------------
\bibliographystyle{amsplain}
\bibliography{NQI2013}

\providecommand{\bysame}{\leavevmode\hbox to3em{\hrulefill}\thinspace}
\providecommand{\MR}{\relax\ifhmode\unskip\space\fi MR }
% \MRhref is called by the amsart/book/proc definition of \MR.
\providecommand{\MRhref}[2]{%
  \href{http://www.ams.org/mathscinet-getitem?mr=#1}{#2}
}
\providecommand{\href}[2]{#2}
\begin{thebibliography}{10}

\bibitem{Cohen_3}
H.~Cohen, \emph{A course in computational algebraic number theory}, Graduate
  Texts in Mathematics, vol. 138, Springer, 1993.

\bibitem{Cohen_2}
H.~Cohen and H.W. Lenstra, \emph{Heuristics on class groups of number fields},
  \text{H}.\text{J}ager ed., Lecture Notes in Math., vol. 1068, 1984.

\bibitem{Cohn}
J.H.E. Cohn, \emph{The length of the period of the simple continued fraction of
  $d^{1/2}$}, Pacific J. Math. \textbf{71} (1977), no.~1, 21--32.

\bibitem{Davenport}
H.~Davenport and H.~Heilbronn, \emph{On the density of discriminants of cubic
  fields. ii}, P. Roy. Soc. Lond. A. Mat. \textbf{322} (1971), no.~1551,
  405--420.

\bibitem{Dirichlet}
P.G.L. Dirichlet, \emph{Vorlesungen \"{u}ber \text{Z}ahlentheorie},
  Braunschweig : F. Vieweg und \text{S}ohn, 1894.

\bibitem{Fouvry_2}
E.~Fouvry and F.~Jouve, \emph{Size of regulators and consecutive square-free
  numbers}, Mathematische Zeitschrift (2012).

\bibitem{Fouvry_1}
\bysame, \emph{A positive density of fundamental discriminants with large
  regulator}, Pacific J. Math \textbf{262} (2013), no.~1, 81--107.

\bibitem{Fouvry_3}
E.~Fouvry and J.~Kl\"{u}ners, \emph{On the negative \text{P}ell equation}, Ann.
  of Math.(2) \textbf{172} (2010), no.~3, 2035--2104.

\bibitem{Granville}
A.~Granville, \emph{\text{ABC} allows us to count squarefrees}, Int. Math. Res.
  Not. (1998), no.~19, 991--1009.

\bibitem{Hardy}
G.H. Hardy and E.M. Wright, \emph{An introduction to the theory of numbers},
  5th ed., Clarendon Press. Oxford, 1979.

\bibitem{Hooley_1}
C.~Hooley, \emph{On the \text{P}ellian equation and the class number of
  indefinite binary quadratic forms}, J. Reine Angew. Math. \textbf{353}
  (1984), no.~2, 98--131.

\bibitem{Ince}
E.L. Ince, \emph{Cycles of reduced ideals in quadratic fields}, Mathematical
  Tables \textbf{IV} (1968).

\bibitem{Jacobson}
M.J. Jacobson, \emph{Experimental results on class groups of real quadratic
  fields (extended abstract)}, Proc. 1998 Algorithmic Number Theory Sympos.
  \textbf{1423}, 463--474.

\bibitem{Li}
X.~Li, \emph{Upper bounds on {L}-functions at the edge of the critical strip},
  Int.Math.Res.Not. \textbf{4} (2010), 727--755.

\bibitem{Ishii}
P.~Kaplan N.~Ishii and K.S. Williams, \emph{On \text{E}isenstein's problem},
  Acta Arith. (1990).

\bibitem{niven1991introduction}
I.M. Niven, H.S. Zuckerman, and H.L. Montgomery, \emph{An introduction to the
  theory of numbers}, 5th ed., Wiley, 1991.

\bibitem{JHPark2}
J.~Park, \emph{Inverse problem for \text{P}ell equation and real quadratic
  fields of the least type},  (Preprint).

\bibitem{Podsypanin}
E.V. Podsypanin, \emph{Length of the period of a quadratic irrational}, Studies
  in number theory. Part 5, Zap. Nauchn. Sem. LOMI, 82,``Nauka" (1979), 95--99,
  (in Russian).

\bibitem{Reiter}
C.~Reiter, \emph{Effective lower bounds on large fundamental units of real
  quadratic fields}, Osaka J. Math. \textbf{22} (1985), no.~4, 755--765.

\bibitem{Rieger1965}
G.J. Rieger, \emph{\"{U}ber die anzahl der als summe von zwei quadraten
  darstellbaren und in einer primen restklasse gelegenen zahlen unterhalb einer
  positiven schranke. ii.}, Journal fur die reine und angewandte Mathematik
  \textbf{217} (1965), 200--216, (in German).

\bibitem{Takagi}
T.~Takagi, \emph{Shoto seisuron kogi}, Kyoritsu (1931), (in Japanese).

\bibitem{Yamamoto}
Y.~Yamamoto, \emph{Real quadratic number fields with large fundamental units},
  Osaka J. Math. \textbf{8} (1971), 261--270.

\end{thebibliography}
% -----------------------------------------------------------

\end{document}